\newtheorem*{pro}{Open Problem}
\def\pr{\begin{pro}}
\def\kpr{\end{pro}}
\def\horpath{\text{straight}}
\def\leftpath{\text{left}}
\def\rightpath{\text{right}}
\DeclareMathOperator{\Ker}{Ker}
\def\TRIPOD {
\begin{xy}
=(              0,  0) "0";
=(              0, 10) "A";
=(-\fiverootthree, -5) "B";
=( \fiverootthree, -5) "C";
<0mm,-0.1mm>;<0.2mm,-0.1mm>:;
"0" ; "A" **@{-};
"0" ; "B" **@{-};
"0" ; "C" **@{-};
\end{xy}}
\def\wmatrix#1{B_{\Omega#1}}
\newcommand\rootthree{1.73205}
\newcommand\fiverootthree{8.66025403784439}
\renewcommand\theenumi{(\roman{enumi})}
\author{Weronika Buczy\'nska \and Jaros\l{}aw Buczy\'nski \and  Kaie Kubjas \and Mateusz Micha\l{}ek}
\date{Sep 2nd, 2012}
\title{On the graph labellings arising from phylogenetics}
\begin{document}
\maketitle

\begin{abstract}
We study semigroups of labellings associated to a graph. These generalize the Jukes-Cantor model and phylogenetic toric varieties defined in \cite{buczynska_graphs}.
Our main theorem bounds the degree of the generators
of the semigroup by $g+1$ when the graph has first Betti number $g$.
Also, we provide a series of examples where the bound is sharp.
\end{abstract}

\medskip
{\footnotesize
\noindent\textbf{addresses:}\\
W.~Buczy\'nska, wkrych@mimuw.edu.pl,\\
\indent Institute of Mathematics of the Polish Academy of Sciences, \'Sniadeckich 8, 00-956 Warsaw, Poland\\
J.~Buczy\'nski, jabu@mimuw.edu.pl,\\
\indent Institute of Mathematics of the Polish Academy of Sciences, \'Sniadeckich 8, 00-956 Warsaw, Poland\\
K.~Kubjas, kaiekubjas@hotmail.com,\\
\indent  Institut f\"ur Mathematik, Freie Universit\"at Berlin, Arnimallee 3, 14195 Berlin, Germany\\
M.~Micha\l{}ek, wajcha2@poczta.onet.pl,\\
\indent Institute of Mathematics of the Polish Academy of Sciences, ul. \'{S}niadeckich 8, 00-956 Warszawa, Poland,\\
\indent and Max Planck Institute for Mathematics, Vivatsgasse 7, 53111 Bonn, Germany

\noindent\textbf{keywords:} 
graph labellings, phylogenetic semigroup, semigroup generators, lattice cone, Hilbert basis, conformal block algebras, Cavender-Farris-Neyman model,
2-state Jukes-Cantor model;

\noindent\textbf{AMS Mathematical Subject Classification 2010:}
Primary: 20M14; Secondary: 14M25, 20M05, 52B20, 13P25, 14D21.}

\section{Introduction}\label{sect_intro}

Throughout the article  $\ccG$ is a non-oriented graph.
We study a subset $\tau(\ccG)$ of the set of all labellings of edges of $\ccG$ by integers.
It has a natural structure of a graded semigroup with edge-wise addition (see \S\ref{sect_definitions} for the definition).
We call it the \textbf{phylogenetic semigroup of $\ccG$}, since the conditions on the labels come from phylogenetics. The first named author studied it in~\cite{buczynska_graphs} as a generalisation of the polytope defining
the Cavender-Farris-Neyman~\cite{Neyman}  model of a trivalent phylogenetic tree.
This model was studied in many papers and is often called the 2-state statistical Jukes-Cantor model
  \cite{buczynska_wisniewski}, \cite{sturmfels_sullivant}, \cite{pachter_sturmfels_alg_stat_for_comp_biology},
  and \cite{buczynska_graphs}\footnote{We thank Elizabeth Allman for bringing the original name of this model to our attention.}.
This is the simplest group-based model.
Hence the associated algebraic variety  is a toric variety, see~\cite{sturmfels_sullivant}, and it is the projective spectrum of $\CC[\tau(\ccG)]$.
Its equations are calculated in \cite{sturmfels_sullivant},
  and its geometric properties are examined in~\cite{buczynska_wisniewski}.

More recently Sturmfels and Xu \cite{sturmfels_xu}
proved that given the number of leaves $n$, the Jukes-Cantor model of a trivalent tree is a sagbi degeneration of the projective spectrum of the Cox ring of the blow-up of $\PP^{n-3}$ in $n$ points. This variety is closely related to the moduli space of rank $2$ quasi-parabolic vector bundles on $\PP^1$ with $n$ marked points.

Further work in this direction was done by Manon in \cite{manon_conformal_blocks} and \cite{manon_coordinate_rings}.
He used a sheaf of algebras over moduli spaces of genus $g$ curves with $n$ marked points coming from the conformal field theory.
The case $g=0$ is the construction of \cite{sturmfels_xu}, thus Manon's work generalises the Sturmfels-Xu construction.
The  semigroup algebras $\CC[\tau(\ccG)]$ are the toric deformations
  of the algebras over the most special points in the moduli of curves in Manon's construction.
Here $\ccG$ is the dual graph of the reducible curve represented by the special point.

Jeffrey and Weitsmann in~\cite{jeffrey_weitsman}
   studied the moduli space of flat $SU(2)$-connections on a genus $g$ Riemann surface.
In their context a trivalent graph $\ccG$ describes the geometry of the compact surface  of genus $g$ with $n$ marked points.
They considered a subset of $\ZZ$-labellings of the graph,
which is exactly $\tau(\ccG)_d$, the $d$-th graded piece of $\tau(\ccG)$.
They proved that the number of elements in this set is equal to the number of Bohr-Sommerfeld fibres associated
   to $\ccL^{\otimes d}$, where $\ccL$ is a natural polarising line bundle on the moduli space.
The Bohr-Sommerfeld fibres are also the central object of study in~\cite{jeffrey_weitsman}.
By the Verlinde formula \cite{verlinde_fusion_rules}, \cite{faltings_proof_for_Verlinde},
   the number of these fibres equals the dimension of the space of holomorphic sections of $\ccL^{\otimes d}$.
This number is the value of the Hilbert function of the toric model of a connected graph with first Betti number
   $g$ and with $n$ leaves (see \cite[Thm~8.3]{jeffrey_weitsman}
   and compare the conditions \cite[(8.2a--c)]{jeffrey_weitsman} with Lemma~\ref{def_cone_of_G} below).

Thanks to the Verlinde formula, which arises from mathematical physics,
  the Hilbert function of the semigroup algebra $\CC[\tau(\ccG)]$ has significant meaning.
In the case of trivalent trees it was used in~\cite{sturmfels_xu}
and studied by Sturmfels and Velasco in~\cite{sturmfels_velasco_P3_blow_up_and_spinor_vars}.
 One of the features of this model is that the Hilbert function depends only on the combinatorial data
\cite{buczynska_wisniewski}, \cite{buczynska_graphs}.
This phenomena fails to be true for other models, even group-based models
\cite{kubjas_kaie_hilb_poly_3Kimura_not_the_same},
\cite{michalek_donten_bury_Phylogenetic_invariants_for_group-based_models}.

As a summary, three distinct areas of science lead to study of the same object of purely combinatorial nature:
  the phylogenetic semigroup of a graph.
Firstly, it generalises Cavender-Farris-Neyman model of a phylogenetic tree.
Secondly, it is related to the moduli spaces of quasi-parabolic vector bundles and moduli spaces of marked curves.
Finally, conformal field theory is interested in enumerating elements of the semigroup.

In this paper we are interested in the problem of determining the degrees of elements in the minimal set of generators of the semigroup $\tau(\ccG)$.
Originally this problem was suggested to us by J.~Wi\'s{}niewski and B.~Sturmfels in a simplified version, where $\ccG$ is trivalent. Thanks to a suggestion of a referee we extended the results to arbitrary graphs.
First, we prove an upper bound for the degree of the generators in terms of first Betti number $g$ of the graph $\ccG$, see Section~\ref{sect_upper_bound} for the proof.

\begin{thm}\label{thm_upper_bound}
Let $\ccG$ be any graph with first Betti number $g$.
Any minimal generator of $\tau(\ccG)$ has degree at most $g+1$.
\end{thm}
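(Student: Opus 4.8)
The plan is to prove, by induction on the first Betti number $g$ of $\ccG$, the slightly more precise statement $(\star)$: if $\omega\in\tau(\ccG)$ has degree at least $g+2$, then $\omega=\eta+\omega'$ for some $\eta\in\tau(\ccG)$ of degree $1$ with $\eta\le\omega$ coordinatewise and $\omega'\in\tau(\ccG)$ of degree $\deg\omega-1$. Iterating $(\star)$ down to degree $g+1$ shows that every minimal generator of $\tau(\ccG)$ has degree at most $g+1$, so Theorem~\ref{thm_upper_bound} follows. For the base case $g=0$, when $\ccG$ is a forest, statement $(\star)$ is equivalent to saying that $\tau(\ccG)$ is generated in degree $1$: writing $\omega=\eta_1+\dots+\eta_d$ with all $\eta_i\in\tau(\ccG)_1$ and taking $\eta=\eta_1$ does the job. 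This generation statement for trees is the projective normality of the Cavender--Farris--Neyman model, and I would take it from \cite{buczynska_wisniewski,buczynska_graphs} (reducing vertices of valence $\ge 4$ to trivalent ones first if a fully general tree is needed); a self-contained argument produces $\eta$ by choosing, at each vertex $v$ with $\sum_{e\ni v}\omega(e)=2\deg\omega$, two incident edges neither of which carries exactly $\deg\omega$, and propagating these choices through the tree, which cannot loop because $\ccG$ is acyclic.

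For the inductive step, assume $g\ge1$, fix $\omega\in\tau(\ccG)_d$ with $d\ge g+2$, pick an edge $e$ lying on a cycle $C$ of $\ccG$, and let $\widehat{\ccG}$ be obtained by \emph{cutting} $e$: delete $e$ and attach a new leg $\ell_u$ at one endpoint $u$ of $e$ and $\ell_v$ at the other endpoint $v$ (both at the same vertex if $e$ is a loop). Then $\widehat{\ccG}$ has first Betti number $g-1$, the sums $\sum_{e\ni v}$ are unchanged at every vertex, and comparing the inequalities of Lemma~\ref{def_cone_of_G} shows that $\tau(\ccG)_\bullet$ is exactly the subsemigroup of $\tau(\widehat{\ccG})_\bullet$ defined by the single equation $\eta(\ell_u)=\eta(\ell_v)$. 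Let $\widehat\omega\in\tau(\widehat{\ccG})_d$ be the image of $\omega$, so $\widehat\omega(\ell_u)=\widehat\omega(\ell_v)=\omega(e)$. Since $d\ge g+2=(g-1)+3$, the inductive hypothesis $(\star)$ applies to $\widehat{\ccG}$ and yields $\widehat\omega=\widehat\eta+\widehat\omega'$ with $\widehat\eta\in\tau(\widehat{\ccG})_1$, $\widehat\eta\le\widehat\omega$, and $\widehat\omega'\in\tau(\widehat{\ccG})_{d-1}$. If $\widehat\eta(\ell_u)=\widehat\eta(\ell_v)$ then $\widehat\eta$ and $\widehat\omega'$ both lie in $\tau(\ccG)$ and $\omega=\widehat\eta+\widehat\omega'$ is the splitting required by $(\star)$; so everything comes down to the \emph{rebalancing problem}: modifying the chosen decomposition of $\widehat\omega$ so that the degree-$1$ summand takes equal values on $\ell_u$ and $\ell_v$.

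A degree-$1$ element has all edge-labels in $\{0,1\}$, so $\widehat\eta(\ell_u),\widehat\eta(\ell_v)\in\{0,1\}$ and the support of $\widehat\eta$ is a disjoint union of cycles and leg-to-leg paths; the obstruction to $(\star)$ for $\widehat\eta$ is only that $\widehat\eta(\ell_u)=1$ while $\widehat\eta(\ell_v)=0$. I would try to repair this by an exchange along the path $P=C\setminus\{e\}$, which connects $u$ to $v$ in $\widehat{\ccG}$: replace $\widehat\eta$ by its symmetric difference with a suitable subset of the edges of $P$ together with one of $\ell_u,\ell_v$, adjusting $\widehat\omega'$ oppositely, so that $\widehat\eta$ remains a valid degree-$1$ element $\le\widehat\omega$ and $\widehat\omega'$ remains in $\tau(\widehat{\ccG})_{d-1}$ (equivalently, the new $\widehat\eta$ still passes through every vertex tight for $\widehat\omega$ and still creates no triangle-inequality violation in $\widehat\omega'$). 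The main obstacle is to guarantee that such a legal exchange exists; I expect to have to use the freedom to choose the edge $e$, the cycle $C$, and the decomposition $\widehat\omega=\widehat\eta+\widehat\omega'$ returned by induction, and to spend the extra unit of degree in $d\ge(g-1)+3$. As a gauge of the difficulty, the cruder version of the argument --- cut all $g$ independent cycles at once, reduce to a forest, decompose the image of $\omega$ into $d$ degree-$1$ pieces, and group the pieces so as to balance all $g$ leg-pairs --- cannot be closed by a naive pigeonhole on the leg-defect vectors in $\{-1,0,1\}^g$, which already has counterexamples for $g\ge2$; so the exchange argument genuinely has to be run with the graph structure in hand.
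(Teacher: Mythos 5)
Your induction rests on the strengthened statement $(\star)$ that every $\omega\in\tau(\ccG)$ of degree at least $g+2$ splits off a degree-one summand, and $(\star)$ is false. Take $g=1$ and let $\ccG$ be the $1$-caterpillar graph: a single edge $e$ joining a leaf to an inner vertex $v$ that also carries a loop $o$. The only networks here are $(e^*,o^*)=(0,0)$ and $(0,1)$, while the element $\omega$ with $(e^*(\omega),o^*(\omega))=(4,2)$ and $\deg\omega=4\ge g+2$ lies in $\tau(\ccG)$ (it is twice the degree-two generator $(2,1)$). Subtracting $(0,0)$ leaves $\deg_v=4>3$, violating the degree inequality, and subtracting $(0,1)$ leaves $e^*=4>2=o^*+o^*$, violating the triangle inequality; so $\omega$ has no degree-one summand at all. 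Hence your inductive step cannot be closed even from $g=0$ to $g=1$, no matter how the rebalancing along $C\setminus\{e\}$ is organized --- which is consistent with your explicitly leaving that rebalancing (``the main obstacle'') unproved. The point is that the theorem only asserts that elements of degree $>g+1$ decompose nontrivially; it does not assert that a degree-one piece can be split off, and once $g\ge1$ these are genuinely different statements.

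Ironically, the paper's proof is a working version of the ``cruder'' strategy you dismiss in your last paragraph: cut all $g$ cycles at once to obtain a tree $\ccT$, decompose the lift of $\omega$ into $d=\deg\omega$ networks in $\tau(\ccT)$, and record the defect of each network on each distinguished leaf pair as a $d\times g$ matrix with entries in $\{-1,0,1\}$. You are right that a naive pigeonhole on defect vectors fails; the paper instead (a) reduces the matrix modulo $2$ and uses that $d>g+1$ rows in $\ZZ_2^{\,g}$ contain a proper nonempty subset summing to zero, so some proper subset of the networks has even column sums, and (b) proves an exchange lemma: if the decomposition is chosen so that the matrix has as many zero entries as possible, then a $+1$ and a $-1$ in the same column can be interchanged between two rows by a path exchange in the group of networks. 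Finitely many such swaps turn the even column sums over the chosen subset into zero column sums, and that subset of networks then sums to an element of $\tau(\ccG)$, yielding the nontrivial splitting. If you want to keep a one-cycle-at-a-time induction, you would have to weaken $(\star)$ so that the split-off summand may have degree larger than one, and then the difficulty becomes controlling the degree bound through the induction.
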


This result has been obtained in several special cases:
for trivalent trees i.e.~$g=0$,  in \cite{buczynska_wisniewski},
for arbitrary trees in \cite{michalek_donten_bury_Phylogenetic_invariants_for_group-based_models},
for trivalent graphs with $g=1$ in \cite{buczynska_graphs}.

Our second result shows that the upper bound of Theorem~\ref{thm_upper_bound} is attained for certain graphs.
We prove the theorem in Section~\ref{sect_construction_of_deg_g_plus_1_elts}
(see Example~\ref{claim_the_deg_g_plus_1_elt_is_indecomposable}).
See also Propositions~\ref{prop_omega_has_0} and Propositions~\ref{prop_G_has_two_valent}
   for extending the examples to graphs which are not trivalent, and with no loops.

\begin{thm}\label{thm_lower_bound_for_g_even}
Let $g$ be even.
There exists a graph $\ccG$ with first Betti number $g$
and an element $\omega\in \tau(\ccG)$ of degree $g+1$
which cannot be written as a non-trivial sum of two elements
$\omega = \omega' + \omega''$ for $\omega', \omega'' \in \tau(\ccG)$.
Specifically, $\ccG$ a $g$-caterpillar graph (see Figure~\ref{fig_g_caterpillar_graph}),
and $\omega$  the labelling in Figure~\ref{fig_the_indecomposable_elt_of_degree_g_plus_1}
is such an example.
\end{thm}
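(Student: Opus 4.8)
The plan is to carry out, with care, the explicit verification announced in the statement. Write $\ccG$ for the $g$-caterpillar of Figure~\ref{fig_g_caterpillar_graph} and $\omega$ for the labelling of Figure~\ref{fig_the_indecomposable_elt_of_degree_g_plus_1}. The first step is purely a check: using the description of $\tau(\ccG)$ in Lemma~\ref{def_cone_of_G}, verify vertex by vertex that $\omega$ is admissible --- at every vertex the parity condition holds and each incident label is bounded by the sum of the remaining incident labels --- and that the least $d$ with $\sum_{e\ni v}\omega(e)\le 2d$ for all $v$ is exactly $g+1$, i.e.\ some vertex already has $\sum_{e\ni v}\omega(e)=2(g+1)$. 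This gives $\omega\in\tau(\ccG)$ with $\deg\omega=g+1$, and by Theorem~\ref{thm_upper_bound} this degree is the largest possible, so the whole content is the indecomposability, which is Example~\ref{claim_the_deg_g_plus_1_elt_is_indecomposable}.

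For indecomposability, suppose $\omega=\omega'+\omega''$ with $\omega',\omega''\in\tau(\ccG)$ both nonzero. Realising $\tau(\ccG)$ as the lattice points of the cone $C$ attached to $\ccG$ by Lemma~\ref{def_cone_of_G}, with the degree recorded as an extra coordinate so that it becomes additive, the decomposition gives $\deg\omega'+\deg\omega''=g+1$ with both degrees $\ge 1$. The elementary but crucial point is that faces of $C$ are extremal: if $\omega$ turns one of the defining inequalities of Lemma~\ref{def_cone_of_G} into an equality --- a ``triangle'' inequality that is tight, or a vertex whose incident labels already sum to $2\deg\omega$ --- then, a sum of nonnegative numbers being zero only when each summand is, the same inequality is tight for $\omega'$ and for $\omega''$ as well. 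Hence both $\omega'$ and $\omega''$ lie on the face $F\subseteq C$ spanned by $\omega$, that is, cut out by exactly those inequalities that $\omega$ saturates.

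The heart of the argument is the analysis of $F$, and this is where the combinatorics of the caterpillar enters. The $g$-caterpillar is a chain of elementary pieces strung along a spine, and $\omega$ is chosen so as to saturate enough of the inequalities of Lemma~\ref{def_cone_of_G} that, reading along the spine, the restriction of an arbitrary element of $F$ to one piece is forced by its restriction to the neighbouring piece: the junctions --- which are two-valent, hence by Lemma~\ref{def_cone_of_G} carry equal labels on their two edges, or else satisfy a tight parity/triangle relation --- transmit the constraint from one link of the chain to the next. Chasing this propagation from one end of the caterpillar to the other pins every element of $F$ down to a nonnegative multiple of $\omega$, so $F$ is the ray $\RR_{\ge 0}\omega$; since the entries of $\omega$ have no common divisor (visible from Figure~\ref{fig_the_indecomposable_elt_of_degree_g_plus_1}), $\omega$ is the primitive lattice point of that ray, whose lattice points are $0,\omega,2\omega,\dots$. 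Then $\omega'=a\omega$ and $\omega''=b\omega$ with $a,b\in\ZZ_{\ge 0}$ and $a+b=1$, impossible for two nonzero summands --- the desired contradiction. Should $F$ turn out to be of dimension larger than one, the very same saturation analysis confines $F$ to a low-dimensional face and one concludes by a finite inspection of the handful of lattice points of $F$ of degree strictly below $g+1$.

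The main obstacle is precisely this third step: one must design the caterpillar, and the labels of $\omega$ on it, so that the constraints saturated by $\omega$ really are numerous enough --- spanning a hyperplane's worth of independent equalities --- to collapse $F$. The hypothesis that $g$ is even is used here: the label pattern of $\omega$ (rising by $2$ along the first half of the spine and symmetrically falling on the second, so as to peak at $g+1$) is globally consistent only when $g$ is even, and the caterpillar then decomposes into $g/2$ congruent double-beads, which is what makes the propagation step uniform. Finally, Propositions~\ref{prop_omega_has_0} and~\ref{prop_G_has_two_valent} follow by transporting this example through the elementary modifications of $\ccG$ --- deleting an edge labelled $0$, or inserting and suppressing two-valent vertices --- under which both admissibility and indecomposability are manifestly preserved.
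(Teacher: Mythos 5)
The convex-geometric core of your argument --- that the minimal face $F$ of the cone containing $\omega$ is the ray spanned by $\omega$, so that indecomposability follows from extremality plus primitivity --- is false, and this is the step the whole proposal rests on. Take $g=2$: the $2$-caterpillar consists of a leaf edge $e_0$ and two edges $f_1,f_2$ meeting at the central vertex $v$, with loops $o_1,o_2$ attached at two further vertices $u_1,u_2$. The element of Figure~\ref{fig_the_indecomposable_elt_of_degree_g_plus_1} has degree $3$ with $e_0^*(\omega)=f_1^*(\omega)=f_2^*(\omega)=2$ and loop labels in $\{1,2\}$; say both loop labels equal $2$. The only inequalities of Lemma~\ref{def_cone_of_G} that $\omega$ saturates are the three degree inequalities at $v,u_1,u_2$. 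But the degree-$2$ element $\eta$ with $e_0^*(\eta)=f_1^*(\eta)=2$, $f_2^*(\eta)=0$, $o_1^*(\eta)=1$, $o_2^*(\eta)=2$ lies in $\tau(\ccG)$, saturates exactly the same three inequalities, and is not proportional to $\omega$; together with its mirror image it shows that $F$ is three-dimensional with $\omega$ in its relative interior. (The same happens for the other choices of loop labels.) So your fallback ``finite inspection of the handful of lattice points of $F$ of degree below $g+1$'' is not a safety net but precisely the missing content, and it grows with $g$. The reason $\omega$ is nonetheless indecomposable is that $\omega-\eta$ violates a triangle inequality at $v$: indecomposability here is an arithmetic phenomenon inside a large face, invisible to any argument that only records which inequalities are tight. (Your preliminary observation that tight linear inequalities pass to both summands is correct and is indeed used in the paper, but it is only the first step.)

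What actually drives the paper's proof is parity. Since $\deg\omega=g+1$ is odd, in any decomposition $\omega=\omega'+\omega''$ one summand, say $\omega'$, has even degree $2i>0$. At every second spine vertex the degree inequality is tight for $\omega$, hence for $\omega'$ and $\omega''$, and Corollary~\ref{cor_parity_on_caterpillar_in_terms_of_local_paths} forces all three local path numbers of $\omega'$ there to be even; as $\omega$ has only one straight and one right path at the first such vertex, $\omega'$ must consist of $2i$ left paths there, and this pattern propagates along the spine while the number of left paths available in $\omega$ at the odd-indexed vertices decreases as $2k-1,2k-3,\dots,1$, giving the contradiction. Your proposal never invokes this parity corollary, and its account of where ``$g$ even'' enters (a label pattern rising and then falling along the spine) does not match the element of Figure~\ref{fig_the_indecomposable_elt_of_degree_g_plus_1}; evenness of $g$ is used exactly to make $\deg\omega=g+1$ odd and the initial count of left paths odd.
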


When $g$ is odd, for all trivalent graphs with first Betti number $g=1$ the bound is attained, as proved in \cite{buczynska_graphs}.
Also, there exist graphs with $g=3$, such that the bound is sharp.
The simplest of these is the $3$-caterpillar graph; we illustrate an indecomposable degree $4$ element
in Section~\ref{sect_small_examples}.
The odd case follows from the even case, i.e. Theorem~\ref{thm_lower_bound_for_g_even}.

\begin{cor}\label{cor_lower_bound_for_g_odd}
Let $g$ be odd.
There exists a graph $\ccG$ with first Betti number $g$,
and an element $\omega\in \tau(\ccG)$ of degree $g$
which cannot be written as a non-trivial sum of two elements in $\tau(\ccG)$.
Specifically, $\ccG$ can be taken as the $g$-caterpillar graph.
\end{cor}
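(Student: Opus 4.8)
The plan is to deduce the odd case from the even one. Write $g=(g-1)+1$ with $g-1$ even and apply Theorem~\ref{thm_lower_bound_for_g_even} to $g-1$: this produces the $(g-1)$-caterpillar graph $\ccG'$ together with an element $\omega'\in\tau(\ccG')$ of degree $(g-1)+1=g$ which admits no non-trivial decomposition in $\tau(\ccG')$. (When $g=1$ this step is applied with $g-1=0$, so $\ccG'$ is a trivalent tree and $\omega'$ has degree $1$.) It then remains to promote $\ccG'$ to a graph of first Betti number $g$ and to carry $\omega'$ along.

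From the definition of the caterpillar graphs (Figure~\ref{fig_g_caterpillar_graph}) one reads off that the $g$-caterpillar graph $\ccG$ is obtained from $\ccG'$ by attaching one further cycle --- the last ``tooth'' of the caterpillar --- so that $\ccG'$ sits inside $\ccG$ as the subgraph obtained by deleting the edges of that last cycle, and $b_1(\ccG)=b_1(\ccG')+1=g$. I would then extend $\omega'$ to a labelling $\omega$ of $\ccG$ by assigning the label $0$ to every edge of the new cycle. The check that $\omega\in\tau(\ccG)_g$ is local against Lemma~\ref{def_cone_of_G}: at the vertex (or vertices) where the new cycle is attached the extra incident labels are $0$, so the parity and triangle conditions there reduce to the ones already satisfied by $\omega'$ at the corresponding vertices of $\ccG'$; at the remaining vertices of the new cycle all incident labels vanish; and no label exceeds $g$. (If attaching the new cycle also subdivides an edge of $\ccG'$, one assigns the label of that edge to both halves; this still satisfies the conditions and does not change $\tau$.)

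For indecomposability, suppose $\omega=\omega_1+\omega_2$ with $\omega_1,\omega_2\in\tau(\ccG)$. Addition is edge-wise and the labels of elements of $\tau$ are non-negative (they lie in $[0,d]$ for a degree-$d$ element), so $\omega_1$ and $\omega_2$ vanish on every edge where $\omega$ vanishes, in particular on the whole new cycle. Hence each $\omega_i$ is supported on $\ccG'$, and --- since the edges it meets along the new cycle carry $0$ --- the vertex conditions for $\omega_i$ inside $\ccG'$ coincide with those for $\omega_i$ inside $\ccG$; thus $\omega_i|_{\ccG'}\in\tau(\ccG')$ and $\omega_1|_{\ccG'}+\omega_2|_{\ccG'}=\omega'$. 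Indecomposability of $\omega'$ forces $\omega_1|_{\ccG'}=0$ or $\omega_2|_{\ccG'}=0$, hence $\omega_1=0$ or $\omega_2=0$. Therefore $\omega$ has no non-trivial decomposition in $\tau(\ccG)$, which is the assertion.

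The only delicate point is the bookkeeping in the middle paragraph: identifying precisely how the $(g-1)$-caterpillar embeds into the $g$-caterpillar (possibly up to subdivision) and confirming that extension by $0$ on the new cycle respects the grading, so that the enlarged element genuinely has degree $g$ and not a value that could be computed differently after enlarging the graph. Everything else is a routine verification of the conditions of Lemma~\ref{def_cone_of_G}.
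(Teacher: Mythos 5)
Your proposal is correct and takes essentially the same route as the paper, which disposes of the odd case in one line by extending the indecomposable degree-$g$ labelling on the $(g-1)$-caterpillar (Theorem~\ref{thm_lower_bound_for_g_even}) to the extra loop of the $g$-caterpillar; your extension by zero together with the restriction argument is precisely what Propositions~\ref{prop_omega_has_0} and~\ref{prop_G_has_two_valent} are set up to justify, including the subdivision bookkeeping you flag. The only cosmetic imprecision is calling the new tooth ``one further cycle'' (the cycle is just the loop; the pendant edge leading to it must also be deleted and a two-valent vertex suppressed to recover the $(g-1)$-caterpillar), but your parenthetical remarks already handle this, and the degenerate case $g=1$ is trivial since every nonzero degree-one element is indecomposable.
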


It remains to address the case when $g$ is an odd integer greater than $3$.
It is natural to expect that when $\ccG$ is the $g$-caterpillar, then
there exists an indecomposable element in $\tau(\ccG)$ of degree $g+1$.
This, however, is false.

\begin{thm}\label{thm_even_decompose_on_caterpilar}
Suppose $\ccG$ is the $g$-caterpillar graph and $\omega \in \tau(\ccG)$ is an element of even degree $d\ge 6$.
   Then $\omega = \omega' +\omega''$ for some non-zero $\omega', \omega'' \in \tau(\ccG)$.
\end{thm}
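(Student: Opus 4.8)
The plan is to analyze the structure of the $g$-caterpillar graph and use an explicit splitting argument on the "leaf-triangles" and the central path. Recall that a $g$-caterpillar has $g$ loops (or small cycles) attached along a central path; by Lemma~\ref{def_cone_of_G} an element $\omega\in\tau(\ccG)$ is a $\ZZ_{\ge 0}$-labelling of the edges satisfying the degree constraint on each vertex (the sum of the three labels at a trivalent vertex is $\le d$ and even) and the parity/triangle-inequality conditions. Since $d\ge 6$ is even, the idea is to construct $\omega'$ as a degree-$4$ (or degree-$2$) sub-labelling that is "supported" on a controlled region and satisfies all the local inequalities, then set $\omega''=\omega-\omega'$ and check it lies in $\tau(\ccG)$ as well.

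First I would set up notation for the caterpillar: label the central-path edges $e_1,\dots,e_m$ and, for each of the $g$ cells, the pair of edges forming the cycle together with the pendant leaf edge. I would record the local inequalities at each vertex explicitly. Second, I would observe that the degree-$d$ condition means every vertex-sum is bounded by $d$, and use the hypothesis $d\ge 6$ together with evenness to find, at each vertex, enough "room" to peel off a globally consistent piece. The natural candidate for $\omega'$ is the labelling that puts small even values along a spanning tree's worth of edges — concretely, a path from one leaf to another leaf passing through all the cycles in a fixed way — with all other edges $0$; this is the kind of element shown to be decomposable/indecomposable in the degree-$(g+1)$ analysis, and here we want a genuinely smaller piece. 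The key step is to verify that both $\omega'$ and $\omega-\omega'$ satisfy the parity condition (each vertex-sum even) and the degree bounds ($\deg\omega'\le$ some small even number, $\deg\omega''=d-\deg\omega'\ge 0$, and each still satisfies the triangle-type inequalities at trivalent vertices).

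The main obstacle — and where the hypothesis $d\ge 6$ is really used — is the case analysis needed when $\omega$ is "concentrated" in a way that leaves little slack: for instance when many cycle-edges carry the maximal label $d$ or when the leaf labels are large. In such configurations a naive path-peeling may violate nonnegativity of $\omega''$ on some edge or the evenness of a vertex-sum. I expect to handle this by distinguishing according to the labels on the pendant leaf edges and on the "bridge" edges of the central path: if some leaf edge has label $\ge 2$, peel a degree-$2$ element through that leaf and across a minimal arc; if all leaf labels are $0$ or $1$, then the constraints force the cycle edges to be small, and one can instead peel a degree-$2$ or degree-$4$ element living entirely inside one cycle together with a bit of the path. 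Because $d\ge 6$, after removing an even piece of degree at most $4$ we still have $\deg\omega''\ge 2>0$, so the decomposition is non-trivial; the low-degree cases $d=2,4$ are exactly the ones excluded, which is consistent with the indecomposable degree-$(g+1)$ examples when $g$ is small. I would close by checking that the finitely many residual configurations (all leaf labels $\le 1$, all path edges $\le 1$) can be split by hand, completing the induction-free argument.
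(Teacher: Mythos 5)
Your overall strategy --- peel off a small even-degree piece $\omega'$ and check that both $\omega'$ and $\omega-\omega'$ satisfy the defining inequalities --- is the same as the paper's, which proves the stronger statement that a degree-$2$ element always splits off (Lemma~\ref{lem_caterpillar_even_degree}). However, what you have written is a plan rather than a proof, and the one idea that makes the plan work is missing. The difficulty is not local but global: at each inner vertex on the horizontal line you must choose the value of $\omega'$ on the outgoing edge consistently with the choice already made on the incoming edge, and a wrong early choice can leave no admissible continuation later. The paper resolves this with a left-to-right greedy rule keyed to the threshold $d/2$: set $b_v(\omega')=2$ if $b_v(\omega)>d/2$ and $b_v(\omega')=0$ if $b_v(\omega)<d/2$, with a free choice at equality; a six-case verification then shows that the required local paths always exist, and the hypothesis $d\ge 6$ is used exactly in the borderline cases $b_v(\omega)=d/2$. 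Nothing in your proposal plays the role of this invariant, and without it the assertion that a ``globally consistent'' piece can be peeled off is unsupported.

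Moreover, the case analysis you sketch rests on incorrect premises. The $g$-caterpillar has a single leaf (the leftmost one), so there is no ``path from one leaf to another'' and no plurality of pendant leaf edges to case on; by Example~\ref{lem_parity_on_caterpillar} every non-loop edge carries an even label, so the case ``all leaf labels are $0$ or $1$'' degenerates. The degree inequality bounds $\deg_v(\omega)=\tfrac12\bigl(a_v(\omega)+b_v(\omega)+c_v(\omega)\bigr)$ by $d$, i.e.\ the sum of the three labels at a vertex is at most $2d$, not $d$ as you state. And it is false that small labels on the tree edges force the cycle edges to be small: a loop whose adjacent edge is labelled $0$ may itself carry any label up to $d$, so there are not ``finitely many residual configurations'' to check by hand (both $g$ and the labels are unbounded). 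To turn your sketch into a proof you would need an explicit rule for propagating the choice of $\omega'$ along the spine together with a verification that it never gets stuck --- which is precisely the content of the paper's argument.
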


In summary, the maximal degree of generators of the semigroup for the $g$-caterpillar graph is as follows.

\begin{cor}
Let $\ccG$ be a $g$-caterpillar graph. Then the semigroup $\tau(\ccG)$ is generated in degree
\[
 \begin{cases}
    g+1, & \text{if } g \text{ is even or } g\in\{1,3\} \\
      g, & \text{if } g \text{ is odd and } \ge 5.
 \end{cases}
\]
\end{cor}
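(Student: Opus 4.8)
The plan is to deduce this corollary purely formally from the four results already stated; the ``proof'' is therefore an organised case analysis, and the only thing requiring a little care is matching the parity and size constraints in the hypotheses.

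First I would record the general upper bound: by Theorem~\ref{thm_upper_bound}, for the $g$-caterpillar graph $\ccG$ every minimal generator of $\tau(\ccG)$ has degree at most $g+1$, so $\tau(\ccG)$ is generated in degrees $\le g+1$. Next I would observe that $\tau(\ccG)$ is a pointed graded semigroup: its degree-$0$ piece consists only of the zero labelling, hence $\tau(\ccG)$ has no non-trivial units, and consequently its (unique) minimal generating set is exactly the set of indecomposable elements, i.e.\ of $\omega$ that cannot be written as $\omega'+\omega''$ with $\omega',\omega''\in\tau(\ccG)$ non-zero. Thus ``$\tau(\ccG)$ is generated in degree $D$'' is the assertion that $D$ is the largest degree of an indecomposable element, and the task reduces to pinning down this maximum in each case. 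I would then split into the three cases. For $g$ even: Theorem~\ref{thm_lower_bound_for_g_even} produces an indecomposable element of degree $g+1$ on the $g$-caterpillar, which together with the upper bound forces the maximal generator degree to be exactly $g+1$. For $g\in\{1,3\}$: when $g=1$ the $1$-caterpillar is a trivalent graph of first Betti number $1$, for which the bound $g+1=2$ is attained by \cite{buczynska_graphs}; when $g=3$ the $3$-caterpillar carries the explicit indecomposable degree-$4$ element exhibited in Section~\ref{sect_small_examples}, so again the maximum equals $g+1$. For $g$ odd with $g\ge5$: here $g+1$ is even and $g+1\ge6$, so Theorem~\ref{thm_even_decompose_on_caterpilar} applies to every element of degree $g+1$ and shows each decomposes; hence no indecomposable element has degree $g+1$ and the maximal generator degree is $\le g$, while Corollary~\ref{cor_lower_bound_for_g_odd} exhibits an indecomposable element of degree $g$ on the $g$-caterpillar, so the maximum is exactly $g$. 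Assembling the three cases gives the displayed formula.

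The one genuine subtlety, which I would double-check carefully, is the interplay of the hypotheses of Theorem~\ref{thm_even_decompose_on_caterpilar} with the small odd values: that theorem needs even degree $\ge6$, so it forces decomposition of degree-$(g+1)$ elements precisely when $g$ is odd and $\ge5$ (then $g+1\ge6$ is even), and it says nothing when $g\in\{1,3\}$ (where $g+1\in\{2,4\}<6$), which is exactly what makes those cases behave like the even ones. I would also state the corollary for $g\ge1$, since both the $g$-caterpillar and the even-case construction of Theorem~\ref{thm_lower_bound_for_g_even} presuppose at least one cycle; the degenerate value $g=0$, if one wishes to include it, is the trivalent-tree case and follows from \cite{buczynska_wisniewski} together with the degree-$1$ bound of Theorem~\ref{thm_upper_bound}. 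Beyond this bookkeeping there is no real obstacle, as all the hard content resides in the quoted theorems.
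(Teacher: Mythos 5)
Your proposal is correct and is essentially the argument the paper intends: the corollary is stated as a summary and follows exactly as you assemble it, from Theorem~\ref{thm_upper_bound} for the upper bound, Theorem~\ref{thm_lower_bound_for_g_even} (resp.\ the $g=1$ result of \cite{buczynska_graphs} and the degree-$4$ example of Section~\ref{sect_small_examples}) for sharpness when $g$ is even (resp.\ $g\in\{1,3\}$), and Theorem~\ref{thm_even_decompose_on_caterpilar} together with Corollary~\ref{cor_lower_bound_for_g_odd} for odd $g\ge 5$. Your parity/size bookkeeping for the hypotheses of Theorem~\ref{thm_even_decompose_on_caterpilar} is exactly the right point to check, and it checks out.
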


Contrary to the case of the $g$-caterpillar graph, the conclusion of Theorem~\ref{thm_even_decompose_on_caterpilar} is false for some other graphs.
In Section~\ref{sect_small_examples} we present an indecomposable element of degree $6$ on a graph with first Betti number $6$.
However, we do not know if there exist a graph $\ccG$ with odd first Betti number $g\ge 5$
such that $\tau(\ccG)$ has a minimal generator of degree $g+1$.

A complete description of the generators of $\tau(\ccG)$ is known
for trivalent trees \cite{buczynska_wisniewski},
for trivalent graphs with first Betti number $1$ \cite{buczynska_graphs}.
We conclude by presenting results of some computational experiments.
Namely, we list all the generators of $\tau(\ccG)$ when $g\le 4$, and enumerate these generators when $g = 5$.

\subsection*{Acknowledgements}

K.~Kubjas was supported by DFG via the Berlin Mathematical School.
The remaining authors were supported by the research project
  ``Deformacje rozmaito\'sci algebraicznych ze specjaln\c a struktur\c a''
   funded by Polish Financial Means for Science in 2011.
W.~Buczy\'nska wishes to acknowledge the hospitality of  Institut Mittag-Leffler
   supported from the AXA Mittag-Leffler Fellowship Project and sponsored by the AXA Research Fund.
J.~Buczy\'nski thanks Institut Mittag-Leffler for hospitality and financial support during his visit at the Institute.
The authors would like to thank Christian Haase and Andreas Paffenholz for helpful discussions,
   Alexander Kasprzyk and Zach Teitler for their comments that have helped to improve the presentation.

\section{Semigroup associated with a graph}\label{sect_definitions}
In this section we generalize the construction of $\tau(\ccG)$ introduced for trivalent graphs in \cite{buczynska_graphs}.

\begin{defin}\label{defin_graph_path_network}
A \textbf{graph} $\ccG$ is a set $\ccV=\ccV(\ccG)$ of vertices and a set $\ccE=\ccE(\ccG)$ of edges,
which we identify with pairs of vertices.
We allow $\ccG$ to have loops or parallel edges.
A graph is \textbf{trivalent} if every vertex has valency one or three.
A vertex with valency one is called a \textbf{leaf} and an edge incident to a leaf is called a \textbf{leaf edge}.
A vertex that is not a leaf is called an \textbf{inner vertex}.
The set of inner vertices is denoted $\ccN=\ccN(\ccG)$.

A \textbf{path} is a sequence of pairwise distinct edges $e_0,\ldots,e_m$
with  $e_i \cap e_{i+1} \ne \emptyset$
for all $i \in \{0,\ldots, m-1\}$, such that either
both $e_0$ and $e_m$ contain a leaf, or $e_0 \cap e_{m} \ne \emptyset$.
In the latter case,
  the path is called a \textbf{cycle}.
A cycle of length one is a \textbf{loop}.
A graph with no cycles is a \textbf{tree}.
Two paths are \textbf{disjoint} if they have no common edge.
A \textbf{network} is a union of pairwise disjoint paths.
For consistency we say that the empty set is also a network.
An edge which is contained in a cycle is called \textbf{cycle edge}.
\textbf{First Betti number} of a graph is the minimal number of cuts that would make the graph into a tree.
\end{defin}

\begin{rmk}
Given the origins of the problem it is tempting to say  \emph{genus} of the graph instead of first Betti number.
However, this is inconsistent with the graph theory notation,
where  genus of a graph is the smallest genus of a surface such that the graph can be embedded into that surface.
\end{rmk}

\begin{defin}\label{lattice_defin}
Given a graph $\ccG$ let $\ZZ \ccE=\bigoplus_{e \in \ccE} \ZZ \cdot e$
be the lattice spanned by $\ccE$,
and $\ZZ \ccE^{\vee} = \Hom (\ZZ \ccE, \ZZ)$ be its dual.
Elements of the lattice $\ZZ \ccE$ are formal linear combinations of the edges,
thus $\ccE$ forms the standard basis of~$\ZZ \ccE$. The dual lattice~$\ZZ \ccE^{\vee}$
comes with the dual basis $\{ e^* \}_{e\in \ccE}$.
We define
\[
M = \{u\in \ZZ \ccE  \mid \forall v\in\ccN\ \quad \sum_{e \ni v}e^*(u)\in 2\ZZ\}.
\]
Then the graded lattice of the graph, with the degree map, is
\[
   \Mg=\ZZ \oplus M,
   \qquad
   \deg : \Mg = \ZZ \oplus M \ra \ZZ,
\]
given by the projection onto the first summand.
\end{defin}


\begin{defin}
Given a tree $\ccT$ the \textbf{phylogenetic polytope $P(\ccT)$ on $\ccT$} is
\begin{displaymath}
 P(\ccT)=\textrm{conv}\{\sum_{e \in \ccE} a_ee\in M: a_e\in\{0,1\}\}.
\end{displaymath}
That is points in $P(\ccT) \cap M$ correspond to networks on $\ccT$.
The \textbf{phylogenetic semigroup $\tau(\ccT)$ on $\ccT$} is
\begin{displaymath}
 \tau(\ccT)=\textrm{cone}\{\{1\}\times P(\ccT)\}\cap \Mg.
\end{displaymath}
\end{defin}

The definition of the phylogenetic polytope on a tree corresponds to the definition of the polytope of the 2-state Jukes-Cantor binary model in \cite{michalek_group_based_models_are_pseudo_toric}, and in a different language in \cite{sturmfels_sullivant}. The phylogenetic semigroup on a tree is the semigroup associated to the phylogenetic polytope.

\begin{center}
\begin{minipage}{\textwidth}
\begin{center}
\epsfxsize=200pt\epsfbox{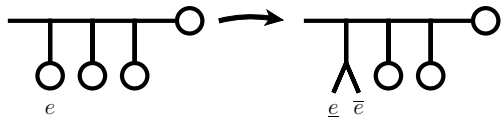}
\captionof{figure}{One step in the procedure of ``cutting'' a graph.}
\label{fig_cutting}
\end{center}
\end{minipage}
\end{center}

To a given graph $\ccG$ with first Betti number $g$ we associate a tree $\ccT$ with $g$ distinguished pairs of leaf edges.
This procedure can be described inductively on $g$.
If $g=0$, then the graph is a tree with no distinguished pairs of leaf edges.
For $g>0$ we choose a cycle edge $e$. We divide $e$ into two edges $\underline{e}$ and $\overline{e}$, adding two vertices $\underline{l}$ and $\overline{l}$ of valency~$1$. The edges $\underline{e}$ and $\overline{e}$ form a distinguished pair of leaf edges (see Figure~\ref{fig_cutting}).
This procedure decreases the first Betti number by one and increases the number of distinguished pairs by one.
Note that usually the resulting tree with distinguished pairs of leaf edges is not unique,
  however a tree with distinguished pairs of leaf edges encodes precisely one graph and the following definition does not depend on the resulting tree.

\begin{defin}
Let $\ccG$ be a graph.
Let $\ccT$ be the associated tree with a set of distinguished pairs of leaves $\{(\underline{e_i},\overline{e_i})\}$.
We define the \textbf{phylogenetic semigroup on $\ccG$} as
$$\tau(\ccG)=\tau(\ccT)\cap \bigcap_{i}\Ker (\underline{e_i}^*-\overline{e_i}^*).$$
In other words, the labelling on the $\underline{e_i}$ is identical to one on $\overline{e_i}$,
and thus the labelling descents to a labelling of $\ccG$.
Thus $\tau(\ccG)$ is canonically embedded in $\Mg(\ccG)$.
\end{defin}

We identify paths and networks in $\ccG$ as in Definition~\ref{defin_graph_path_network}
    with elements of the lattice $M$ and  replace
    union in $\ccE$ with sum in the group $M \subset \ZZ \ccE$.
Under this identification, the networks correspond precisely to the degree one elements in $\tau(\ccG)$.
More precisely, we define:

\begin{defin}
A \textbf{network in the graded lattice} $\Mg$ is a pair
  $\omega=(1,a) \in \Mg$ where $a\in M$ is a network.
\end{defin}

\section{The upper bound}\label{sect_upper_bound}

The goal of this section is to prove Theorem~\ref{thm_upper_bound}.
We proceed in three steps.
First we recall the result of \cite[Prop.~3.12]{michalek_donten_bury_Phylogenetic_invariants_for_group-based_models} that gives Theorem~\ref{thm_upper_bound} in case $g=0$:
if $\ccT$ is a tree, the phylogenetic polytope $P(\ccT)$ is \emph{normal},
meaning that  any lattice point in the rescaling $nP$ can be obtained as sum of $n$ lattice points in $P$ (usually not in a unique way).
This implies that the semigroup $\tau(\ccT)$ is generated by $\tau(\ccT)_1$.

\begin{cor}
   \label{cor_on_tree_elements_decompose}
   Let $\ccT$ be a tree. Every $\omega \in \tau(\ccT)_d$ can be expressed as
   \mbox{$\omega = \omega_1 +\dotsb + \omega_d$}, where each $\omega_i \in \tau(\ccT)_1$ is a network.
\end{cor}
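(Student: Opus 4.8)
The plan is simply to unwind the definitions of $\tau(\ccT)$ and $P(\ccT)$ and then invoke the normality of $P(\ccT)$ recalled just above from \cite[Prop.~3.12]{michalek_donten_bury_Phylogenetic_invariants_for_group-based_models}. There is essentially nothing to prove beyond this bookkeeping, so I do not expect a serious obstacle; the only point worth spelling out is that the degree-$d$ slice of the cone $\mathrm{cone}(\{1\}\times P(\ccT))$ is exactly $\{d\}\times dP(\ccT)$.

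Concretely, I would first observe that, by the definition $\tau(\ccT)=\mathrm{cone}(\{1\}\times P(\ccT))\cap \Mg$, an element $\omega\in\tau(\ccT)_d$ (the case $d=0$ being the empty sum, so assume $d\ge 1$) has the form $\omega=(d,a)$ with $a\in dP(\ccT)\cap M$. Indeed, the intersection of $\mathrm{cone}(\{1\}\times P(\ccT))$ with the affine hyperplane $\{\deg = d\}$ is $\{d\}\times dP(\ccT)$, and intersecting further with $\Mg=\ZZ\oplus M$ forces $a\in M$.

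Next I would apply normality of $P(\ccT)$: since $a$ is a lattice point of the $d$-th dilate $dP(\ccT)$, it can be written as $a=a_1+\dots+a_d$ with each $a_i\in P(\ccT)\cap M$. By the description of $P(\ccT)$, each such lattice point $a_i$ corresponds to a network on $\ccT$, so $\omega_i:=(1,a_i)\in\{1\}\times(P(\ccT)\cap M)\subset\tau(\ccT)_1$ is a network in the graded lattice. Summing over $i$ gives $\omega=(d,a)=\sum_{i=1}^d(1,a_i)=\omega_1+\dots+\omega_d$, which is the desired decomposition.

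Finally I would note that this is precisely the statement that $\tau(\ccT)$ is generated in degree $1$ by its networks, i.e.\ Theorem~\ref{thm_upper_bound} in the case $g=0$. The entire content lies in the cited normality result; the argument above is just the standard translation between a normal lattice polytope and the semigroup of the cone over it.
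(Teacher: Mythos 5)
Your proposal is correct and matches the paper's (implicit) argument exactly: the paper derives this corollary directly from the normality of $P(\ccT)$ cited from \cite[Prop.~3.12]{michalek_donten_bury_Phylogenetic_invariants_for_group-based_models}, with the same routine translation between the degree-$d$ slice of the cone and the $d$-th dilate of the polytope that you spell out. No gaps; the bookkeeping you supply is all that is needed.
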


In the second step, we represent a graph $\ccG$ with first Betti number $g$
   as a tree $\ccT$ together with $g$ distinguished pairs of leaf edges, that are ``glued'' together.
For an element $\omega \in \tau(\ccG)$ we consider the decomposition of the corresponding element in $\tau(\ccT)$
   into a sum of degree $1$ elements of $\tau(\ccT)$.
To each such decomposition we assign a matrix with entries in $\set{-1,0,1}$.
Since the decomposition is not unique, we study how simple modifications of the decomposition affect the matrix.
Finally, we apply a sequence of these modifications to the matrix
   to prove that any sufficiently high degree element of $\tau(\ccG)$ decomposes.

\subsection{Matrix associated to a decomposition of a lifted element}

Let $\ccG$ be a graph with first Betti number $g$ and $\ccT$ the associated tree with $g$ distinguished pairs of leaf edges. There is a one-to-one correspondence between elements of $\tau(\ccG)$ and the elements of $\tau(\ccT)$ that assign the same value to the leaf edges in each distinguished pair.
Thus we have the natural inclusion $\tau(\ccG)\subset \tau(\ccT)$.
See \cite[\S2.2--2.3]{buczynska_graphs} for a more geometric interpretation of this inclusion.

Let $\omega$ be an element of $\tau(\ccG)$.
By Corollary~\ref{cor_on_tree_elements_decompose}, in the semigroup $\tau(\ccT)$ there exists a decomposition
  $\omega=\omega_1+\dotsb+\omega_{\deg(\omega)}$, where $\omega_i\in \tau(\ccT)_1$.
Let $\Omega = (\omega_1,\dotsc,\omega_{\deg(\omega)})$
  and consider the matrix $\wmatrix{}$ with $\deg(\omega)$ rows and $g$ columns indexed by pairs of distinguished leaf edges.
The entry in the $i$-th row and column indexed by a pair of distinguished leaf edges $(\underline{e},\overline{e})$ is $\underline{e}^*(\omega_i)-\overline{e}^*(\omega_i)$.
Thus, since $\omega_i$ is a network $e^*(\omega_i) \in \set{0,1}$ for any edge,
   entries of $\wmatrix{}$ are only  $-1$, $0$ or $1$.

The matrix $\wmatrix{}$ depends on the tree $\ccT$ and on the decomposition of $\omega$ into the sum of degree one elements. An entry of $\wmatrix{}$ is zero when the corresponding network is compatible on the corresponding distinguished pair of leaf edges. Our aim is to decompose any element $\omega$ with $\deg(\omega)>g+1$ in $\tau(\ccG)$. This means that we are looking for decompositions in $\tau(\ccT)$ that are compatible on the distinguished pairs of leaf edges. Hence, it is natural to consider matrices with as many zero entries as possible.
\begin{lem}\label{prosty}
Let $\omega$ be an element of $\tau(\ccT)$. Let $\omega=\omega_1+\dots+\omega_{\deg(\omega)}$ be a decomposition of $\omega$ into networks. Let $\wmatrix{}$ be the matrix with $\deg(\omega)$ rows corresponding to the decomposition. For any subset of indices $\{j_1,\dots,j_p\}\subset \{1,\dots,\deg(\omega)\}$ the following conditions are equivalent:
\begin{enumerate}
\item the element $\omega_{j_1}+\dots+\omega_{j_p}$ is in $\tau(\ccG)$;
\item in each column of $\wmatrix{}$ the sum of entries in rows $j_1,\dots,j_p$ is equal to zero.
\end{enumerate} $\hfill\square$
\end{lem}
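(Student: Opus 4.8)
The plan is to unwind the definitions on both sides and observe that the two conditions are literally the same statement written in two different languages. Recall that $\tau(\ccG)$ sits inside $\tau(\ccT)$ as the intersection with the kernels of the functionals $\underline{e_k}^* - \overline{e_k}^*$, one for each distinguished pair $(\underline{e_k},\overline{e_k})$, $k=1,\dots,g$. So an element $\sigma\in\tau(\ccT)$ lies in $\tau(\ccG)$ if and only if $(\underline{e_k}^* - \overline{e_k}^*)(\sigma) = 0$ for every $k$.

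First I would set $\sigma := \omega_{j_1}+\dots+\omega_{j_p}$. Since the functionals are linear, $(\underline{e_k}^* - \overline{e_k}^*)(\sigma) = \sum_{t=1}^{p} \bigl(\underline{e_k}^*(\omega_{j_t}) - \overline{e_k}^*(\omega_{j_t})\bigr)$. By the very definition of the matrix $\wmatrix{}$, the term $\underline{e_k}^*(\omega_{j_t}) - \overline{e_k}^*(\omega_{j_t})$ is exactly the entry of $\wmatrix{}$ in row $j_t$ and in the column indexed by the pair $(\underline{e_k},\overline{e_k})$. Hence $(\underline{e_k}^* - \overline{e_k}^*)(\sigma)$ equals the sum of the entries of column $k$ over the rows $j_1,\dots,j_p$. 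One also needs the trivial remark that $\sigma$ is a genuine element of $\tau(\ccT)$ to begin with — this holds because $\tau(\ccT)$ is a semigroup and each $\omega_{j_t}\in\tau(\ccT)_1\subset\tau(\ccT)$, so any partial sum of the $\omega_i$ is again in $\tau(\ccT)$.

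Putting these together: condition (ii), that every column sum over the chosen rows vanishes, says precisely that $(\underline{e_k}^* - \overline{e_k}^*)(\sigma) = 0$ for all $k$, which by the definition of $\tau(\ccG)$ as $\tau(\ccT)\cap\bigcap_k \Ker(\underline{e_k}^*-\overline{e_k}^*)$ is precisely condition (i). This gives the equivalence in both directions simultaneously, so no separate converse argument is needed.

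There is really no main obstacle here; the lemma is a bookkeeping statement and the only thing to be careful about is matching the indexing conventions — that the columns of $\wmatrix{}$ are indexed by distinguished pairs in the same order as the kernels cut out in the definition of $\tau(\ccG)$, and that the sign convention ($\underline{e}^*$ minus $\overline{e}^*$) is used consistently in both the matrix entries and the functionals. Once those conventions are pinned down, the proof is a one-line linearity computation, which is presumably why the authors relegated it to a $\square$ with no displayed argument.
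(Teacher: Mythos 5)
Your proof is correct and is exactly the argument the paper intends: the lemma is stated with a $\square$ and no displayed proof precisely because it reduces, as you show, to the linearity of the functionals $\underline{e_k}^*-\overline{e_k}^*$ together with the definition of $\tau(\ccG)$ as $\tau(\ccT)\cap\bigcap_k \Ker(\underline{e_k}^*-\overline{e_k}^*)$ and the closure of $\tau(\ccT)$ under addition. Nothing further is needed.
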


Even if we start from a decomposable $\omega$ the associated matrix might not have this property; it depends upon the choice of decomposition of $\omega$ in $\tau(\ccT)$. The following lemma shows how to change this decomposition in order to obtain a matrix with the required property.

\begin{lem}\label{exchange_one_with_minus_one}
Let $\omega$ be an element of $\tau(\ccT)$. Let us choose a decomposition $\Omega$ of $\omega$ that gives a matrix $\wmatrix{}$ with as many zeros as possible.
Let us choose two entries in the matrix $\wmatrix{}$ that are in the same column indexed by $(\underline{e}_1,\overline{e}_1)$.
Suppose they are equal respectively, to $1$ and $-1$.
There exists a decomposition $\Omega'$ of $\omega$ that yields a matrix $\wmatrix{'}$ the same as $\wmatrix{}$,
 except for those two entries, which are interchanged.
\end{lem}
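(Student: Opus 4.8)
The plan is to work with the two networks $\omega_i, \omega_j$ corresponding to the two chosen rows, say with $\underline{e}_1^*(\omega_i) - \overline{e}_1^*(\omega_i) = 1$ and $\underline{e}_1^*(\omega_j) - \overline{e}_1^*(\omega_j) = -1$, and to produce a new pair of networks $\omega_i', \omega_j'$ with $\omega_i' + \omega_j' = \omega_i + \omega_j$ in $\tau(\ccT)$, such that the column values for $(\underline{e}_1,\overline{e}_1)$ get swapped (becoming $-1$ and $1$) while \emph{every other} column value of these two rows is unchanged. Replacing $\omega_i, \omega_j$ by $\omega_i', \omega_j'$ in $\Omega$ then gives $\Omega'$ and the desired matrix $\wmatrix{'}$. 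The key point to exploit is that $\omega_i + \omega_j$ lies in $\tau(\ccT)_2$, and $P(\ccT)$ is normal (Corollary~\ref{cor_on_tree_elements_decompose}), so $\omega_i + \omega_j$ has \emph{many} decompositions into two networks; I must show one of them realizes the swap without collateral damage.

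The cleanest route is to argue locally on the tree. Consider the sum of the two networks $\sigma := \omega_i + \omega_j \in \tau(\ccT)_2$; as a labelling it assigns to each edge $e$ a value $\sigma_e = e^*(\omega_i) + e^*(\omega_j) \in \{0,1,2\}$. The set $S$ of edges $e$ with $\sigma_e = 1$ carries all the ``freedom'': on edges with $\sigma_e \in \{0,2\}$ every decomposition of $\sigma$ into two networks must assign $\{0,0\}$ or $\{1,1\}$ respectively, so those edge-values are forced. First I would analyze the structure of $S$: the parity conditions at inner vertices (each network has even local degree at each $v\in\ccN$, hence so does $\sigma$, and the forced edges contribute an even amount) force $S$ to meet each inner vertex in an even number of edges; combined with the tree structure, $S$ decomposes into a disjoint union of paths of $\ccT$. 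On each such path, a decomposition of $\sigma$ restricted to the path amounts to a proper $2$-coloring choice — essentially, once you fix which of the two networks ``owns'' the path near one end, everything along that path is determined. So the decompositions of $\sigma$ into two ordered networks are in bijection with a choice, for each path-component $P$ of $S$, of how to split $P$ between $\omega_i'$ and $\omega_j'$: either $P \subseteq \omega_i'$, or $P \subseteq \omega_j'$, or $P$ is split at an inner vertex where it touches the rest of the network.

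The heart of the argument: the two leaf edges $\underline{e}_1$ and $\overline{e}_1$ are \emph{leaf} edges of $\ccT$, hence each lies at the end of (at most) one path-component of $S$, or is forced. Since $\underline{e}_1^*(\omega_i) + \underline{e}_1^*(\omega_j)$ and $\overline{e}_1^*(\omega_i) + \overline{e}_1^*(\omega_j)$ are both equal to $1$ (because the differences are $\pm 1$ and each term is in $\{0,1\}$), both $\underline{e}_1$ and $\overline{e}_1$ lie in $S$. I then take $\omega_i'$ to be the network obtained from $\omega_i$ by, \emph{only} on the path-component(s) of $S$ containing $\underline{e}_1$ or $\overline{e}_1$, swapping the roles of $\omega_i$ and $\omega_j$ — i.e.\ toggling ownership of exactly those components — and leaving $\omega_i$ unchanged on all other edges; set $\omega_j' := \sigma - \omega_i'$. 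By construction $\omega_i' + \omega_j' = \sigma$, both are genuine networks in $\tau(\ccT)_1$ (toggling a whole path-component preserves the local parity at every vertex), the values on $\underline{e}_1$ and $\overline{e}_1$ for $\omega_i$ are interchanged so the column entry flips sign, and every edge \emph{outside} those path-components — in particular every leaf edge in every \emph{other} distinguished pair $(\underline{e}_k,\overline{e}_k)$ that is not joined to $\underline{e}_1$ or $\overline{e}_1$ through $S$ — keeps its $\omega_i$-value, so every other matrix entry in rows $i,j$ is unchanged.

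The main obstacle I anticipate is ruling out the possibility that the path-component of $S$ containing $\underline{e}_1$ \emph{also} contains $\overline{e}_1$, or contains some other distinguished leaf edge $\underline{e}_k$ or $\overline{e}_k$: then toggling that one component would flip the $(\underline{e}_1,\overline{e}_1)$ entry correctly but might also disturb column $k$. Here I would use that if $\underline{e}_1$ and $\overline{e}_1$ are joined by a path-component $P$ of $S$, then toggling $P$ swaps the values at both ends simultaneously — but the value at $\underline{e}_1$ and at $\overline{e}_1$ for $\omega_i$ are $a$ and $1-a$ with $a\in\{0,1\}$ (since their $\sigma$-sums are both $1$ and the differences have opposite... actually the differences are $+1$ and must we check they are consistent): since the $(\underline{e}_1,\overline{e}_1)$-entries in rows $i$ and $j$ are $+1$ and $-1$, we have $\omega_i$ giving $(\underline{e}_1,\overline{e}_1) \mapsto (1,0)$ and $\omega_j$ giving $(0,1)$; toggling $P$ then sends row $i$ to $(0,1)$, i.e.\ entry $-1$, as required, and since any \emph{other} distinguished leaf on $P$ would have to lie at an interior point of the path $P$, which is impossible because distinguished edges are leaf edges of $\ccT$ and a leaf edge can only be an endpoint of a path-component — so $P$ contains no third distinguished leaf, and the only columns affected are column $(\underline{e}_1,\overline{e}_1)$ itself (if $\underline e_1,\overline e_1$ share a component) or just that column (if they lie in two different components, toggle both). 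In either case exactly the one column is touched and exactly those two entries are swapped, proving the lemma.
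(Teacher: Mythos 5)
Your setup is sound and matches the paper's: the set $S$ of edges where the two networks $\omega_i,\omega_j$ disagree is a network on the tree, both $\underline{e}_1$ and $\overline{e}_1$ lie in $S$, and swapping the two entries amounts to adding a suitable sub-network $b\subseteq S$ to both $\omega_i$ and $\omega_j$ in the group of networks. But there is a genuine gap at exactly the point you flag as ``the main obstacle.'' When $\underline{e}_1$ and $\overline{e}_1$ lie in different path-components $P$ and $P'$ of $S$, you argue that no \emph{other} distinguished leaf edge can appear because it would have to be an interior edge of a path. That only excludes interior occurrences; it says nothing about the \emph{other endpoint} of $P$ (resp.\ $P'$), which can perfectly well be a distinguished leaf edge $\underline{e}_k$ of a different pair. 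If it is, toggling $P$ changes $\underline{e}_k^*(\omega_i)$ without changing $\overline{e}_k^*(\omega_i)$ (unless you also toggle whatever component contains $\overline{e}_k$), so the entry in column $k$ of rows $i,j$ changes and the conclusion of the lemma fails. This is not a corner case: it is the whole difficulty of the lemma.

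The paper resolves it by continuing the construction: when the path reaches a distinguished leaf edge $\underline{e}_m$ whose column has two zero entries in rows $i,j$, a new path is started from $\overline{e}_m$ inside (the updated) $S$, and the chain of paths is extended until it either terminates at an unpaired leaf, terminates at a pair with a nonzero entry, or returns to $(\underline{e}_1,\overline{e}_1)$. The first two terminations are then excluded using the hypothesis that $\wmatrix{}$ has as many zeros as possible (they would produce a matrix with strictly more zeros, or turn two zeros into two non-zeros while gaining two zeros elsewhere, contradicting maximality or the case assumption), so the chain must close up at the original pair and only the two chosen entries are interchanged. Your proposal never uses the maximality-of-zeros hypothesis, which is a telltale sign: without it the conclusion is actually false (the swap can be obstructed, and instead one obtains a decomposition with more zeros). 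To repair your argument you would need to add both the chaining step across distinguished pairs and the maximality argument that forces the chain to return to $(\underline{e}_1,\overline{e}_1)$; at that point you have reproduced the paper's proof.
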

\begin{proof}
Let $\omega=\omega_1+\dots+\omega_{\deg(\omega)}$ be the given decomposition.
Without loss of generality we may assume that the entries are in the first and second row. Hence $\omega_1$ associates to the edges $\underline{e}_1$ and $\overline{e}_1$ values $0$ and $1$ respectively, and similarly $\omega_2$ associates $1$ and $0$.

To facilitate modifications of networks, we introduce the group of networks,
  following \cite[Def.~4.1]{michalek_toric_geometry_3Kimura}.
The elements are networks, and the group addition is modulo $2$,
  that is an edge is in the sum if and only if it is in exactly one of the summands. Formally:
\begin{notat}
The \textbf{group of networks} is the subset of
\[
\ZZ_2 \ccE:=\bigoplus_{e \in \ccE} \ZZ_2 \cdot e
\]
such that a formal sum $e_1+e_2 +\dotsb+e_k \in \ZZ_2 \ccE$ is in the group of networks
  if and only if $\set{e_1, e_2, \dotsc, e_k}$ is a network.
Note that this subset forms a subgroup of $\ZZ_2 \ccE$.

Let $S$ be the set of  all edges of the tree $\ccT$ on which the networks $\omega_1$ and $\omega_2$ disagree.
  $S$ is a network and in fact $S=\omega_1+\omega_2$ (sum in the group of networks).
Later we will replace $S$ with other networks.
\end{notat}

Our aim is to construct a network $b \subset S$ which realises the swapping of entries in the following sense.
For networks
   $\omega'_1 = \omega_1 + b$ and $\omega'_2 = \omega_2 + b$ (the sums in the group of networks),
   the new factorisation given by $\omega =  \omega'_1+\omega'_2+\omega_3+\dots+\omega_{\deg(\omega)}$
   (the sum is in $\tau(\ccT)$) interchanges the two entries as desired. The network $b$ will consist of paths
    $(p_1, p_2, p_3,\dotsc)$, which we construct inductively.
Define $p_1$ to be any path contained in $S$ starting at $\overline{e}_1$.
It is  possible as all inner vertices are adjacent to an even number of edges from  $S$.
  Next, we replace $S$ by $S+p_1$ (sum  in the group of networks).

Suppose that we have constructed a sequence of paths $p_1,\dots, p_{m-1}$ for $m>1$,
  where the first edge of $p_i$ is $\overline{e}_i$, the last is $\underline{e}_{i+1}$,
  and $(\underline{e}_i,\overline{e}_i)$ is a distinguished pair for all $i \in \setfromto{1}{m-1}$.
After each inductive step, if $p_m$ is constructed we will replace $S$ by $S+p_{m}$,
  where the sum is taken in the group of networks.

\begin{enumerate}
  \item\label{stop_not_paired}
        If the edge $\underline{e}_m$ is not paired, stop the construction.
        Otherwise go to Case~\ref{stop_pair_in_S}.
\item\label{stop_pair_in_S}
        If there is a distinguished pair $(\underline{e}_m,\overline{e}_m)$ and $\underline{e}_m^*(\omega_1)\neq \overline{e}_m^*(\omega_1)$ or $\underline{e}_m^*(\omega_2)\neq \overline{e}_m^*(\omega_2)$,
          i.e. at least one of the two entries in the column $(\underline{e}_m, \overline{e}_m)$ is non-zero,
          stop the construction.
        Otherwise go to Case~\ref{new_path_in_S}.
\item\label{new_path_in_S}
        If there is a distinguished pair $(\underline{e}_m,\overline{e}_m)$ and  $\underline{e}_m^*(\omega_1)=\overline{e}_m^*(\omega_1)$, $\underline{e}_m^*(\omega_2)=\overline{e}_m^*(\omega_2)$,
          then  $\omega_1$ and $\omega_2$ disagree on $\overline{e}_m$. Note that $\overline{e}_m$ is in $S$. Indeed, otherwise $\overline{e}_m$ would belong to some $p_i$ for $i<m$. As we have reached $\underline{e}_m$ by edges not belonging to any $p_i$ we must have $\underline{e}_m=\underline{e}_1$.
If this was true, we would have been in Case~\ref{stop_pair_in_S} and the construction would have terminated.

We define $p_{m}$ to be a path contained in $S$ starting from $\overline{e}_m$.
          Let $\underline{e}_{m+1}$ be the other end of the path $p_{m}$.
We increase $m$ by $1$ and replace $S$ by $S+p_{m}$, where the sum is taken in the group of networks.
We start over from Case~\ref{stop_not_paired}.
\end{enumerate}

Let us notice that the constructed paths are distinct, as each time we remove the edges of paths from $S$.
In particular, the construction terminates.

We define a network $b \subset S$ to be the union of paths $(p_1,\ldots,p_{m-1})$.
We use it to define two new networks $\omega_1'$ and $\omega_2'$.
Namely, $\omega_i'=\omega_i+b$, where the sum is taken in the group of networks.
In other words, $\omega_1'$ (resp.~$\omega_2'$) coincides with $\omega_1$ (resp.~$\omega_2$) on all edges apart from those belonging to the network $b$.
On the latter ones $\omega_1'$ (resp.~$\omega_2'$) is a negation of $\omega_1$ (resp.~$\omega_2$), hence coincides with $\omega_2$ (resp.~$\omega_1$).
In particular,  $\omega_1+\omega_2=\omega_1'+\omega_2'$, where this time the sum is taken in $\tau(\ccT)$.

We get a decomposition $\Omega'=(\omega_1', \omega_2', \omega_3, \dotsc, \omega_{\deg(\omega)})$
  with $\omega = \sum \Omega'$ and the associated matrix $\wmatrix{'}$.
We claim that it exchanges the two chosen entries equal to $1$ and $-1$.

Consider each distinguished pair of leaf edges through which we passed during our construction of $(p_1,\ldots,p_{m-1})$.
If we did not stop at a pair $(l_1,l_2)$ each network $\omega_1$ and $\omega_2$ assigns the same value to $l_1$ and $l_2$
  ---  otherwise we would have stopped because of Case~\ref{stop_pair_in_S}.
On these leaf edges $\omega_1'$ and $\omega_2'$ agree  with $\omega_2$ and $\omega_1$ respectively.
Hence, they also assign the same value to $l_1$ and $l_2$.
In particular, both  $\wmatrix{}$ and $\wmatrix{'}$ have zeros in the first two rows in the column indexed by $(l_1,l_2)$.
In fact, the only four entries on which $\wmatrix{}$ and $\wmatrix{'}$ might possibly differ are the entries
  in first two rows in the columns indexed by $(\underline{e}_1,\overline{e}_1)$ or $(\underline{e}_m,\overline{e}_m)$, where $p_m$ is the last path.

Suppose the construction stopped in \ref{stop_not_paired}.
Then the last leaf edge is not paired, hence we change only entries in the column indexed by $(\underline{e}_1,\overline{e}_1)$.
Since both $\omega_1'$ and $\omega_2'$ agree on $\underline{e}_1$ and $\overline{e}_1$,  we have that  $\wmatrix{'}$ has two zeros,  whereas $\wmatrix{}$ had $1$ and $-1$.
This contradicts the assumption that $\wmatrix{}$ has as many zeroes as possible.

Now suppose the construction terminated in Case~\ref{stop_pair_in_S}.
Consider two sub-cases.

1)
The edges $\underline{e}_m\neq \underline{e}_1$ are distinct.
We  exclude this case.
We change four entries in two columns.
The two entries in the column indexed by $(\underline{e}_1,\overline{e}_1)$ are changed from $1$ and $-1$ to zero.
We know that matrix $\wmatrix{'}$ has at most as many zero entries as $\wmatrix{}$.
Hence the two entries in the column indexed by $(\underline{e}_m,\overline{e}_m)$ must be changed from two zeros to two non-zeros.
Having two zeros in $\wmatrix{}$ in those entries contradicts the assumptions of Case~\ref{stop_pair_in_S}.

2) The edges $\underline{e}_m=\underline{e}_1$ are equal. In this case $\overline{e}_m=\overline{e}_1$, so we only exchange two entries in the column indexed by $(\underline{e}_1,\overline{e}_1)$. This means that we have exchanged $1$ and $-1$, which proves the lemma.
\end{proof}

\begin{proof}[Proof of Theorem~\ref{thm_upper_bound}]
Consider an element $\omega$ of degree $\deg(\omega)>g+1$ in  $\tau(\ccG)$ and
a tree $\ccT$ associated with the graph $\ccG$.
Let us choose a decomposition $\Omega$ of $\omega$ in $\tau(\ccT)$,
  so that the associated matrix $\wmatrix{}$ has as many zero entries as possible.
We find a subset of rows of the matrix $\wmatrix{}$ such that the sum of entries in each column is even as follows.
Reduce the entries of $\wmatrix{}$ modulo 2 obtaining the matrix $C_{\Omega}$ with entries from $\ZZ_2$.
We think of rows of $C_{\omega}$ as vectors of the $g$-dimensional vector space over the field $\ZZ_2$.
We have $\deg(\omega)>g+1$ such vectors. Hence we can find a \emph{strict} subset of linearly dependent vectors.
As we work over $\ZZ_2$ there exists a strict subset of these vectors summing  to $0$. The same subset $R$ of rows in matrix $\wmatrix{}$ sums to even numbers in each column.

Since $\omega \in \tau(\ccG)$, the sum of entries in each column of the matrix $\wmatrix{}$ is zero. Suppose the sum of entries in the rows from $R$ is non-zero in a column. Using Lemma~\ref{exchange_one_with_minus_one} we exchange the entries, changing the sum by $2$ until it is equal to zero. This way we get a decomposition $\Omega'$ of $\omega$ such that the rows from $R$ sum to zero in each column. By Lemma~\ref{prosty} the sum of networks corresponding to rows from $R$ is in $\tau(\ccG)$.
The sum of the remaining networks is also in $\tau(\ccG)$.
We have obtained a non-trivial decomposition of~$\omega$.
\end{proof}


\section{The upper bound is sharp for even $g$}\label{sect_construction_of_deg_g_plus_1_elts}

In this section we show that if $g$ is even, the bound $g+1$ is sharp for a caterpillar graph with $g$ loops.
More generally to construct high degree indecomposable elements it suffices  to consider  trivalent graphs.

\begin{lem}
 Let $\ccG$ be a graph with first Betti number $g$ and phylogenetic semigroup  generated in degree $n$. There exists a trivalent graph $\ccG'$ with first Betti number $g$ and phylogenetic semigroup  generated in degree $\geq n$.
\end{lem}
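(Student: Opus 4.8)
The strategy is to show that an arbitrary graph $\ccG$ can be replaced by a trivalent graph $\ccG'$ without decreasing the first Betti number or the maximal generating degree, by resolving each high-valency vertex into a tree of trivalent vertices. The plan is to proceed by induction on $\sum_{v\in\ccN}(\operatorname{val}(v)-3)_+$, so it suffices to treat a single vertex $v$ of valency $k\ge 4$. First I would pick such a vertex $v$, choose two of its incident edge-ends, and split $v$ into two vertices $v_1,v_2$ joined by a new edge $e_0$, attaching the two chosen edge-ends to $v_1$ and the remaining $k-2$ edge-ends to $v_2$. This operation does not change the first Betti number (we added one vertex and one edge), and it strictly decreases the defect $\sum(\operatorname{val}-3)_+$, since $v_1$ now has valency $3$ and $v_2$ has valency $k-1$. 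Iterating, we reach a trivalent graph $\ccG'$ with the same first Betti number $g$.

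The heart of the argument is to compare the phylogenetic semigroups $\tau(\ccG)$ and $\tau(\ccG')$ under a single such splitting. I would exhibit an embedding $\tau(\ccG)\hookrightarrow\tau(\ccG')$ that respects the grading. Concretely: given a labelling $a$ of the edges of $\ccG$ lying in $\tau(\ccG)_d$, extend it to a labelling $a'$ of $\ccG'$ by assigning to the new edge $e_0$ the value $a'(e_0) := a(f_1)+a(f_2)-2\min(\dots)$ — more precisely, the parity condition at $v_1$ forces $a'(e_0)\equiv a(f_1)+a(f_2)\pmod 2$, and degree considerations force $0\le a'(e_0)\le d$; one checks there is a (canonical) choice making $a'$ satisfy all the defining inequalities of $\tau(\ccT')$ for a tree $\ccT'$ obtained by cutting $\ccG'$ along the same cycle edges used to cut $\ccG$. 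The key point is that the parity condition at the old vertex $v$ in $\ccG$ (sum of labels even) is equivalent to the conjunction of the parity conditions at $v_1$ and $v_2$ in $\ccG'$, because the contribution of $e_0$ cancels modulo $2$. This gives a degree-preserving injective semigroup homomorphism, and one checks it is a \emph{section}-type embedding: a degree-one element of $\tau(\ccG')$ restricts to a degree-one element (network) of $\tau(\ccG)$, essentially because contracting $e_0$ sends networks to networks.

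Finally I would deduce the degree statement. Suppose $\tau(\ccG)$ has a minimal generator $\omega$ of degree $n$; view it via the embedding as $\omega'\in\tau(\ccG')_n$. If $\omega'$ decomposed nontrivially in $\tau(\ccG')$ as $\omega'=\eta_1+\eta_2$, then contracting $e_0$ (equivalently, restricting along the inverse of the embedding, which is well-defined because $\omega'$ lies in the image) would express $\omega$ as a nontrivial sum of two elements of $\tau(\ccG)$, contradicting indecomposability. Hence $\omega'$ is indecomposable in $\tau(\ccG')$, so $\tau(\ccG')$ is not generated in degree $<n$; that is, $\ccG'$ has generating degree $\ge n$. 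Combining all the splittings yields the trivalent graph required by the lemma. The main obstacle I anticipate is verifying that the value assigned to $e_0$ can always be chosen so that $a'$ genuinely lies in $\tau(\ccG')$ (all the ``triangle'' inequalities at $v_1$ and $v_2$, and the descent to $\ccG'$), and that contraction of $e_0$ really does carry $\tau(\ccG')$-elements in the image back to $\tau(\ccG)$-elements compatibly with decompositions; this is a finite but slightly delicate case-check on the inequalities defining the phylogenetic polytope of a tree, and it is where one must use that $v_1$ is trivalent so the constraint at $v_1$ is exactly the classical Jukes-Cantor triangle inequality.
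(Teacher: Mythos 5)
Your construction of $\ccG'$ --- splitting a $k$-valent vertex into a trivalent $v_1$ and a $(k-1)$-valent $v_2$ joined by a new edge $e_0$, and iterating until the valency defect vanishes --- is exactly the paper's, as is the Betti number bookkeeping. Your concluding step is also sound once one has two maps: a degree-preserving lift $\tau(\ccG)\to\tau(\ccG')$ and a degree-preserving semigroup homomorphism $\tau(\ccG')\to\tau(\ccG)$ splitting it. (One small repair there: you must apply the \emph{projection} (forgetting the $e_0$-coordinate) to the summands $\eta_1,\eta_2$, not ``the inverse of the embedding''; the $\eta_i$ have no reason to lie in the image of your embedding, whereas the coordinate projection is defined on all of $\tau(\ccG')$ and sends a decomposition of $\omega'$ to one of $\omega$. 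The paper phrases the whole argument this way: $\tau(\ccG)$ is a coordinate projection of $\tau(\ccG')$, so images of generators generate.)

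The genuine gap is in how you produce the lift. You propose to assign a value to $e_0$ by a closed formula in the labels of $\omega$ at $v$ and then ``check the defining inequalities of $\tau(\ccT')$''. This fails on both counts. First, the formula you gesture at, $a'(e_0)=a(f_1)+a(f_2)-2\min(\dotsc)=|a(f_1)-a(f_2)|$, is wrong: take a $4$-valent vertex where $\omega$ has degree $3$ and values $(2,2,2,0)$ on $f_1,f_2,f_3,f_4$ (realizable as the sum of the three networks $\{f_1,f_2\}$, $\{f_1,f_3\}$, $\{f_2,f_3\}$). Splitting off $f_1,f_2$ to $v_1$, your choice gives $a'(e_0)=0$, which violates the triangle inequality $|a(f_3)-a(f_4)|\le a'(e_0)$ at $v_2$; the only admissible value is $a'(e_0)=2$, and it is detected by the network decomposition, not by $|a(f_1)-a(f_2)|$. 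Second, and more structurally, the inequality description of Lemma~\ref{def_cone_of_G} is stated only for trivalent graphs; at the still-non-trivalent vertex $v_2$ of each intermediate graph you have no facet description to check against, so ``one checks the inequalities'' is not an argument available in this framework. The paper sidesteps both issues using normality of tree polytopes (Corollary~\ref{cor_on_tree_elements_decompose}): write $\omega$ as a sum of degree-one networks on the tree $\ccT$, lift each network to $\ccT'$ by putting $0$ or $1$ on $e_0$ according to the parity of the number of its edges at $v_1$, and add the lifts; the result lies in $\tau(\ccT')$ by definition and satisfies the gluing conditions since the distinguished pairs consist of old edges. With the lift produced this way, and the projection used as above, your argument closes and coincides with the paper's.
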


\begin{proof}
 We construct $\ccG'$ from $\ccG$. Choose an inner vertex $v$ of $\ccG$ that is not trivalent. Replace $v$ by $v'$ and $v''$ together with a new edge between them, let 2 edges incident to $v$ be incident to $v'$ and the rest of the edges incident to $v$ be incident to $v''$. After a finite number of replacements we get a trivalent graph $\ccG'$,
   because $valency(v')< valency (v)$ and $valency(v'')< valency (v)$.

Now consider a tree $\ccT$ with $g$ distinguished pairs of leaf edges associated to $\ccG$ that is attained by dividing edges $e_1,e_2,\ldots,e_g$ into two. Dividing exactly the same edges $e_1,e_2,\ldots,e_g$ into two in $\ccG'$ gives a tree $\ccT'$ with $g$ distinguished pairs of leaf edges associated to $\ccG'$. As $\tau(\ccT)$ and $\tau(\ccT')$ are normal, the semigroup $\tau(\ccT)$ is a coordinate projection of the semigroup $\tau(\ccT')$ that forgets coordinates corresponding to new edges.
Hence the semigroup $\tau(\ccG)$ is a coordinate projection of the semigroup $\tau(\ccG')$ and projections of generators of $\tau(\ccG')$ generate $\tau(\ccG)$.
\end{proof}

\subsection{Trivalent graphs}

We introduce  notation and definitions specific to trivalent graphs useful for constructing high degree indecomposable elements.

\begin{notat}
We denote the elements of the lattice
$\ZZ \ccE^{\vee}$ dual to the edges meeting at the inner vertex~$v$
\[
\begin{array}{lcr}
a_v:=\big(i_v(e_1)\big)^*,  &
b_v:=\big(i_v(e_2)\big)^*,  &
c_v:=\big(i_v(e_3)\big)^*,  \\
\end{array}
\]
where $\{e_1, e_2, e_3\}$ are the edges of $\TRIPOD$ and
$i_v : \TRIPOD \mono \ccG$ is a map which is
locally an embedding and sends the central vertex of $\TRIPOD$
to $v$ (see Figures~\ref{fig_example_for_abc} and \ref{fig_decomposition_into_local_paths}).
\end{notat}

Given an element $\omega$ in either $\ZZ \ccE$, $M$, or $\Mg$,
each of $a_v, b_v, c_v \in \ZZ \ccE^{\vee}$ measures the coefficient of $\omega$
at an edge incident to $v$.

\begin{defin}
The \textbf{degree} of $\omega\in \Mg$ \textbf{at an inner vertex} $v\in\ccN$ is
\[
\deg_v(\omega):=\half\cdot\bigl(a_v(\omega)+b_v(\omega)+c_v(\omega)\bigr)
\]
(see Figure~\ref{fig_example_for_abc2} for an illustrative example).
\end{defin}

\begin{figure}[htb]
\begin{tabular}{ccc}
\begin{minipage}[t]{0.43\textwidth}
\begin{center}
\epsfxsize=150pt\epsfbox{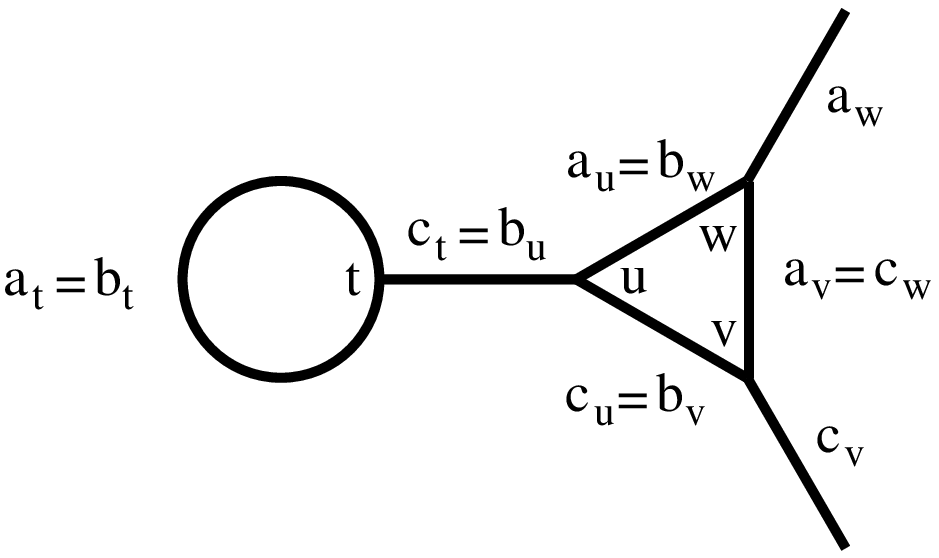}
\end{center}
\caption{A graph with four vertices $t, u, v, w$ with $a_*, b_*, c_*$ indicated for each vertex.}
\label{fig_example_for_abc}
\end{minipage}
 &   \phantom{b} &
\begin{minipage}[t]{0.43\textwidth}
\begin{center}
\epsfxsize=150pt\epsfbox{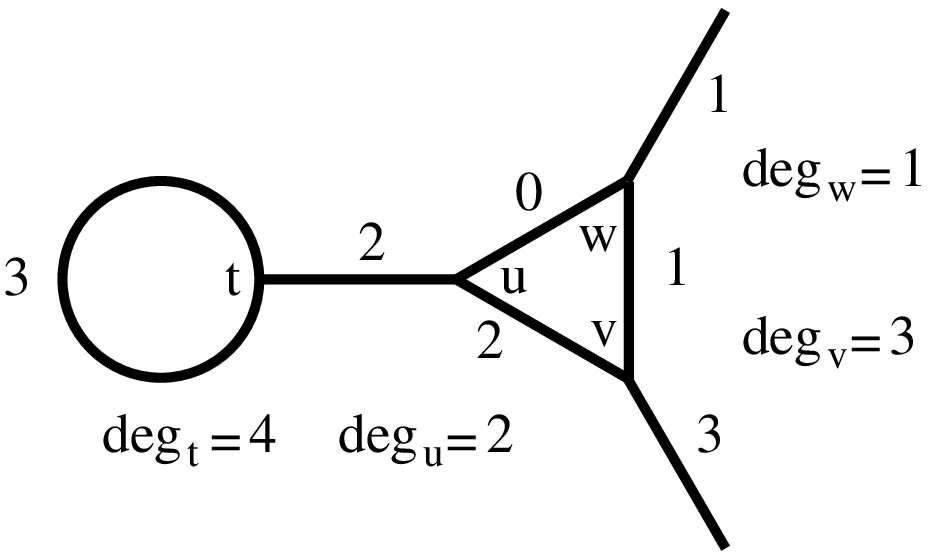}
\end{center}
\caption{An element $\omega \in \tau(\ccG)$ with $\deg(\omega)=4$, together with values of degrees at each inner vertex.
         Thus $a_t(\omega) = b_t(\omega) = 3$, $c_t(\omega)=  b_u(\omega)= 2$, etc.}
\label{fig_example_for_abc2}
\end{minipage}
\end{tabular}
\end{figure}

Following \cite[Def.~2.18 \& Lem.~2.23]{buczynska_graphs}
  we give the inequality description of phylogenetic semigroups for trivalent graphs.

\begin{lem}\label{def_cone_of_G}
For a trivalent graph $\ccG$ the \textbf{phylogenetic semigroup $\tau(\ccG)$ on $\ccG$} is the set of elements $\omega$ satisfying the following conditions
\begin{enumerate}
\renewcommand{\theenumi}{\textnormal{[$\heartsuit\!\!\heartsuit$]}}
\item  \label{item_parity_condition} parity condition:  $\omega \in \Mg$,
\renewcommand{\theenumi}{\textnormal{[+]}}
\item  \label{item_non-negativity_condition} non-negativity condition:
$e^*(\omega) \ge 0$ for any $e \in \ccE$,
\renewcommand{\theenumi}{\textnormal{[$\triangle$]}}
 \item  \label{item_triangle_inequalities} triangle inequalities:
$ |a_v(\omega)-b_v(\omega)| \leq c_v(\omega) \leq a_v(\omega)+ b_v(\omega)$, for each inner vertex $v\in \ccN$,
\renewcommand{\theenumi}{\textnormal{[\textdegree]}}
\item  \label{item_degree_inequality}
degree inequalities:  $\deg(\omega) \geq \deg_v(\omega)$ for any $v\in \ccN$.
\end{enumerate}
\end{lem}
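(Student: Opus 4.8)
The plan is to reduce the statement to the case of a tree and then establish it there by induction on the number of inner vertices; for trivalent graphs this is essentially \cite[Lem.~2.23]{buczynska_graphs}, but since $\tau(\ccG)$ has here been redefined through the cutting construction, I indicate how to recover it. For the reduction, let $\ccT$ be a tree associated to $\ccG$ with distinguished pairs $(\underline{e_i},\overline{e_i})$. Subdividing a cycle edge in one cutting step adds two valency-one vertices but changes no existing valency, so $\ccN(\ccT)=\ccN(\ccG)$; moreover, for any $\omega\in\bigcap_i\Ker(\underline{e_i}^*-\overline{e_i}^*)$ one has $e^*(\omega)=\underline{e_i}^*(\omega)=\overline{e_i}^*(\omega)$ along each cut edge. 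Hence for such $\omega$ the four conditions \ref{item_parity_condition}, \ref{item_non-negativity_condition}, \ref{item_triangle_inequalities}, \ref{item_degree_inequality} written on $\ccG$ are literally the conditions written on $\ccT$: the two new leaves at each cut contribute nothing to \ref{item_triangle_inequalities} or \ref{item_degree_inequality}, and the edge sums entering \ref{item_parity_condition} at an inner vertex are unchanged. Since $\tau(\ccG)=\tau(\ccT)\cap\bigcap_i\Ker(\underline{e_i}^*-\overline{e_i}^*)$, it suffices to treat $\ccG=\ccT$ a tree.

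For the easy (necessity) direction: a degree-one element of $\tau(\ccT)$ is a network, and at every inner vertex its three incident labels are either all $0$ or a permutation of $(1,1,0)$, so \ref{item_parity_condition}--\ref{item_degree_inequality} hold on it. All four conditions are preserved under addition in $\Mg$ --- for \ref{item_triangle_inequalities} because $|x-y+x'-y'|\le|x-y|+|x'-y'|$, and for \ref{item_degree_inequality} because $\deg$ and $\deg_v$ are additive. By Corollary~\ref{cor_on_tree_elements_decompose} every element of $\tau(\ccT)$ is a sum of networks, hence satisfies all four conditions.

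For the hard (sufficiency) direction: suppose $\omega=(d,u)\in\Mg$ satisfies \ref{item_parity_condition}--\ref{item_degree_inequality}; I want to exhibit $d$ networks summing to $u$. A first derived fact is $e^*(\omega)\le d$ for every edge $e$: picking an inner vertex $w$ incident to $e$, say with $e^*=a_w$, \ref{item_triangle_inequalities} gives $c_w(\omega)\ge|a_w(\omega)-b_w(\omega)|$, whence $\deg_w(\omega)=\half\bigl(a_w(\omega)+b_w(\omega)+c_w(\omega)\bigr)\ge a_w(\omega)$, and \ref{item_degree_inequality} finishes it (in a trivalent tree every edge meets an inner vertex). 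Now induct on $|\ccN(\ccT)|$; when there is a single inner vertex, i.e.\ the tripod $\TRIPOD$, the four inequalities say exactly that $\tfrac1d u$ lies in $P(\ccT)$, whose vertices $(0,0,0),(1,1,0),(1,0,1),(0,1,1)$ are networks, so normality of $P(\ccT)$ gives the decomposition. For the inductive step choose a cherry: an inner vertex $v$ meeting two leaf edges $e_1,e_2$ and a third edge $e_3$. Delete the two leaves to obtain $\ccT'$, in which $e_3$ is now a leaf edge and $\ccN(\ccT')=\ccN(\ccT)\setminus\{v\}$; the restriction $\omega'=(d,u')$ satisfies \ref{item_parity_condition}--\ref{item_degree_inequality} on $\ccT'$ since the only new constraint, $e_3^*(\omega)\le d$, was just established. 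By induction $u'=\sum_{i=1}^{d}\omega'_i$ with each $\omega'_i$ a network on $\ccT'$, of which exactly $a_3:=e_3^*(\omega)$ use $e_3$. Extend each $\omega'_i$ to a network on $\ccT$: the ones using $e_3$ must continue through $v$ to exactly one of $e_1,e_2$, so take label $(1,0)$ or $(0,1)$ on $(e_1,e_2)$; the ones not using $e_3$ take $(0,0)$ or $(1,1)$. Declaring $(1,0)$ on $p:=\half\bigl(a_1-a_2+a_3\bigr)$ of the first kind and $(1,1)$ on $q:=\half\bigl(a_1+a_2-a_3\bigr)$ of the second kind (with $a_j:=e_j^*(\omega)$) yields networks summing to $u$; the constraints $0\le p\le a_3$, $0\le q\le d-a_3$ and the integrality of $p,q$ are precisely \ref{item_triangle_inequalities} at $v$, \ref{item_degree_inequality} at $v$, and \ref{item_parity_condition} at $v$. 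This completes the induction, and with it the lemma.

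The only nontrivial point is this last direction: one has to actually produce a decomposition into networks compatible across the cherry, and observe that it is exactly the triangle and degree inequalities at $v$ (together with the parity there) that keep the counts $p$ and $q$ both in range and integral. Everything else --- the reduction to trees and the closure of the conditions under addition --- is routine bookkeeping.
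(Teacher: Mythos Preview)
Your argument is correct. Note that the paper itself does not supply a proof of this lemma: the statement is imported verbatim from \cite[Def.~2.18 \& Lem.~2.23]{buczynska_graphs}, where $\tau(\ccG)$ for trivalent $\ccG$ was \emph{defined} by these inequalities; here the lemma merely records the compatibility of that description with the cutting definition of $\tau(\ccG)$ given in \S\ref{sect_definitions}. So there is no in-paper proof to compare against beyond the citation.

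That said, your write-up is a clean self-contained account, and in fact does more than is strictly required. In the sufficiency direction you exhibit an explicit decomposition of $(d,u)$ into $d$ networks via the cherry induction; this simultaneously re-establishes normality of $P(\ccT)$ (Corollary~\ref{cor_on_tree_elements_decompose}), which the paper imports separately from \cite{michalek_donten_bury_Phylogenetic_invariants_for_group-based_models}. Conversely, for necessity you invoke that corollary where a more direct route exists: the four conditions are, apart from the lattice condition, linear inequalities holding at every vertex of $P(\ccT)$, hence on the whole cone over $\{1\}\times P(\ccT)$. Two cosmetic remarks: in the inductive step no ``new constraint'' actually appears on $\ccT'$ --- the conditions at the removed vertex $v$ are simply dropped --- so your verification of $e_3^*(\omega)\le d$ there is superfluous; and the tripod base case can be done by bare hands (take $\half(a_1+a_2-a_3)$ copies of $(1,1,0)$, etc.) rather than by appeal to normality, which avoids any appearance of circularity with Corollary~\ref{cor_on_tree_elements_decompose}.
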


The triangle inequalities~\ref{item_triangle_inequalities} are symmetric and do not depend on the embedding $i_v$.
\begin{rmk}\label{value_less_than_degree}
   If every edge of $\ccG$ contains at least one inner vertex,
      then the inequalities above imply $\deg(\omega) \geq e^*(\omega)$
      for all edges.
   On the other  hand, in the degenerate cases
      where one of the connected components of $\ccG$ consists of one edge only,
      for consistency the inequality $\deg(\omega) \geq e^*(\omega)$ should be included in Lemma~\ref{def_cone_of_G}.
   However, we will not consider these degenerate cases here.
\end{rmk}

\subsection{Loops, caterpillar graphs, and local paths}

Assume $\ccG$ is trivalent. We investigate the influence of loops in the graph $\ccG$ on the semigroup $\tau(\ccG)$, particularly on the parity condition. Then we define the $g$-caterpillar graph and apply the conditions coming from loops to this case.
Finally, we define an element of the phylogenetic semigroup and we prove it is indecomposable.

\begin{ex}
  Let $o \in \ccE$ be a loop with unique vertex $v_o \in o$.
There is exactly one edge $e_o$, other than $o$, such that $v_o \in e_o$.
Loops force parity of the label on $e_o$,
   that is if $\omega \in M$ then $e_o^*(\omega)$ is even.
It is a straightforward consequence of the parity condition \ref{item_parity_condition}, or the definition of $M$
  in the neighbourhood of $v_o$.
\end{ex}

\begin{center}
\begin{minipage}{\textwidth}
\begin{center}
\epsfxsize=200pt\epsfbox{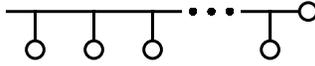}
\captionof{figure}{The $g$-caterpillar graph.}
\label{fig_g_caterpillar_graph}
\end{center}
\end{minipage}
\end{center}
The trivalent graph obtained from the caterpillar tree with $g+1$ leaves by attaching a loop to all but one leaf
(the leftmost one) is called the \textbf{$g$-caterpillar graph}, see Figure~\ref{fig_g_caterpillar_graph}.

\begin{ex}\label{lem_parity_on_caterpillar}
Let $\ccG$ be the $g$-caterpillar graph and $\omega \in \ZZ\ccE$.
The parity condition~\ref{item_parity_condition} on the $g$-caterpillar graph can be seen as a requirement of
  parity at each edge which is not a loop. That is
$\omega \in M$ if and only if
$e^*(\omega)$ is even on every edge $e$ other than loops.
\end{ex}

The conditions defining $\tau(\ccG)$
   imply that every element $\omega\in\tau(\ccG)$ decomposes locally in a unique way into paths around any vertex.
This means that there exist non-negative integers $x_v, y_v, z_v$ related to $a_v, b_v, c_v$ as in Figure~\ref{fig_decomposition_into_local_paths} such that $\deg(\omega) \ge x_v +y_v + z_v$
   (see \cite[Sect.~2.4]{buczynska_graphs} for more details).
In the case of the $g$-caterpillar graph we denote the local paths at an inner vertex $v$ on the horizontal line \emph{straight} ($z_v$), \emph{left} ($y_v$) and \emph{right} ($x_v$) paths,
  see Figure~\ref{fig_building_graph}.
A consequence of Example~\ref{lem_parity_on_caterpillar} in terms of the local paths is the following.
\begin{center}
\begin{tabular}{ccc}
\begin{minipage}[b]{0.49\textwidth}
\[
\begin{xy}
(0,0)*{}="0";
(0,10)*{} ="A";
(-2,0)="Abvector";
(1,-\rootthree)="Bcvector";
(1, \rootthree)="Cavector";
( 1,10)*{} ="Ac";
(-\fiverootthree,-5)*{}="B" ;
( \fiverootthree,-5)*{}="C" ;
(0,-2)="a"; ( \halfrootthree,0.5)="b"; (-1.5,  \halfrootthree)="c";
"0" ; "A" **@{-}-"a"*{a_v};
"0" ; "B" **@{-}-"b"*{b_v};
"0" ; "C" **@{-}-"c"*{c_v};
"A"+"Abvector" ; "B"-"Bcvector" **\crv{"0"+"c"};
"B"+"Bcvector" ; "C"-"Cavector" **\crv{"0"+"a"};
"C"+"Cavector" ; "A"-"Abvector" **\crv{"0"+"b"};
"0"-"A" *{x_v};
"0"-"B" *{y_v};
"0"-"C" *{z_v};
\end{xy}
\qquad
\begin{array}{rlrlrlrl}
a_v&=&     & & y_v &+& z_v\\
b_v&=& x_v &+&     &+& z_v\\
c_v&=& x_v &+& y_v & &    \\
\end{array}
\]
\captionof{figure}{The decomposition into local paths at any vertex.}
\label{fig_decomposition_into_local_paths}
\end{minipage}&\phantom{ab}&
\begin{minipage}[b]{0.4\textwidth}
\begin{center}
\epsfxsize=80pt\epsfbox{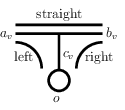}
\captionof{figure}{Notation for local paths on a vertex in a $g$-caterpillar graph.}
\label{fig_building_graph}
\end{center}
 \end{minipage}
\end{tabular}
\end{center}

\begin{cor}\label{cor_parity_on_caterpillar_in_terms_of_local_paths}
  Let $\ccG$ be a $g$-caterpillar graph,  $\omega \in \tau(\ccG)$,  $v$ a vertex not on a loop. Then
    \begin{itemize}
     \item  if $\deg_v \omega$ is even,  $x_v(\omega), y_v(\omega), z_v(\omega)$ are all even,
     \item  if $\deg_v \omega$ is odd,  $x_v(\omega), y_v(\omega), z_v(\omega)$ are all odd.
    \end{itemize}
\nopagebreak
  In particular, $\deg_v(\omega) \ne 1$.
\end{cor}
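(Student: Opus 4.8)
The statement to prove is Corollary~\ref{cor_parity_on_caterpillar_in_terms_of_local_paths}: for a vertex $v$ of the $g$-caterpillar graph that does not lie on a loop, the parities of $x_v(\omega), y_v(\omega), z_v(\omega)$ all match the parity of $\deg_v(\omega)$, and in particular $\deg_v(\omega)\ne 1$. The approach is to combine the linear relations of Figure~\ref{fig_decomposition_into_local_paths} with the parity constraint from Example~\ref{lem_parity_on_caterpillar}. First I would recall from Figure~\ref{fig_decomposition_into_local_paths} that $a_v = y_v + z_v$, $b_v = x_v + z_v$, $c_v = x_v + y_v$, and hence that $a_v(\omega) + b_v(\omega) + c_v(\omega) = 2(x_v(\omega) + y_v(\omega) + z_v(\omega))$, so $\deg_v(\omega) = x_v(\omega) + y_v(\omega) + z_v(\omega)$.

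The key observation is that $v$ not being on a loop means all three edges at $v$ are non-loop edges, so by Example~\ref{lem_parity_on_caterpillar} the values $a_v(\omega), b_v(\omega), c_v(\omega)$ are all even. (Strictly, Example~\ref{lem_parity_on_caterpillar} says non-loop edges carry even labels; one must note that each of the three edges incident to $v$ is indeed a non-loop edge of the $g$-caterpillar — the two ``horizontal'' edges at $v$ are non-loop, and the pendant edge $e_o$ is also non-loop, it is the loop-edge $o$ that is excluded. Actually for a vertex not on a loop, all three incident edges are non-loop edges.) Then from $x_v(\omega) = \half(b_v(\omega) + c_v(\omega) - a_v(\omega))$ and the analogous expressions for $y_v(\omega), z_v(\omega)$, each of $x_v, y_v, z_v$ is a half-sum of three even integers, hence an integer, and more precisely $x_v(\omega) \equiv y_v(\omega) \equiv z_v(\omega) \pmod 2$ would follow by comparing: $x_v - y_v = b_v - a_v$, $x_v - z_v = c_v - a_v$, and since $a_v, b_v, c_v$ are all even, all these differences are even. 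So $x_v(\omega), y_v(\omega), z_v(\omega)$ are pairwise congruent mod $2$, say all $\equiv \varepsilon \pmod 2$. Then $\deg_v(\omega) = x_v + y_v + z_v \equiv 3\varepsilon \equiv \varepsilon \pmod 2$, which is exactly the claimed dichotomy: $\deg_v(\omega)$ even $\iff$ $x_v, y_v, z_v$ all even, and $\deg_v(\omega)$ odd $\iff$ all odd.

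For the final assertion $\deg_v(\omega)\ne 1$: if $\deg_v(\omega) = 1$ then it is odd, so $x_v(\omega), y_v(\omega), z_v(\omega)$ are all odd, hence all $\ge 1$, hence $\deg_v(\omega) = x_v + y_v + z_v \ge 3$, a contradiction. (Here I use non-negativity of the local path counts, which is part of the local decomposition statement quoted before the corollary, and the fact that a nonnegative odd integer is at least $1$.) I do not anticipate a genuine obstacle here; the only subtle point — and the thing worth spelling out carefully — is the claim that for a vertex $v$ off every loop, all three edges at $v$ are non-loop edges of the $g$-caterpillar graph, so that Example~\ref{lem_parity_on_caterpillar} applies to all of $a_v(\omega), b_v(\omega), c_v(\omega)$; once that is in place the rest is elementary parity bookkeeping on the linear system of Figure~\ref{fig_decomposition_into_local_paths}.
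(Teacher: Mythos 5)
Your proof is correct and follows exactly the route the paper intends: the paper states the corollary as an immediate consequence of Example~\ref{lem_parity_on_caterpillar} (all non-loop edges of the $g$-caterpillar carry even labels) combined with the linear relations of Figure~\ref{fig_decomposition_into_local_paths}, which is precisely your parity bookkeeping, and your observation that all three edges at a vertex off every loop are non-loop edges is the right (and only) point needing care. The final step $\deg_v(\omega)\ne 1$ via non-negativity of $x_v,y_v,z_v$ is also as intended.
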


\begin{ex}\label{claim_the_deg_g_plus_1_elt_is_indecomposable}
Suppose $g=2k$ is even, and let $\ccG$ be the $g$-caterpillar graph.
The element $\omega$ defined on Figure \ref{fig_the_indecomposable_elt_of_degree_g_plus_1} is indecomposable.

\begin{center}
\epsfxsize=200pt\epsfbox{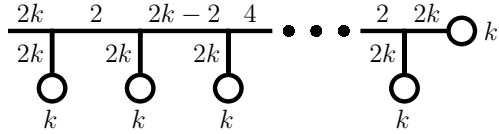}
\captionof{figure}{The indecomposable element $\omega$ of degree $g+1$ on the $g$-caterpillar graph for even~$g$.}
\label{fig_the_indecomposable_elt_of_degree_g_plus_1}
\end{center}

\end{ex}

\begin{proof}
We begin the proof by explaining the local decomposition of $\omega$.
Starting from the left-most inner vertex of the caterpillar tree we have

\noindent
\begin{longtable*}{rll}
(1) & $2k-1$ left, $1$ right, $1$ straight paths & \parbox[c]{151pt}{\epsfxsize=150pt\epsfbox{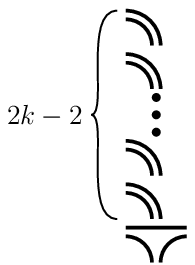}}\\
\\
(2) & $2k-2$ right, $2$ left paths &               \parbox[c]{151pt}{\epsfxsize=150pt\epsfbox{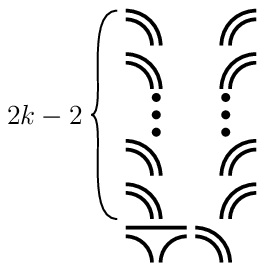}}\\
\\
(3) & $2k-3$ left, $3$ right, $1$ straight paths & \parbox[c]{151pt}{\epsfxsize=150pt\epsfbox{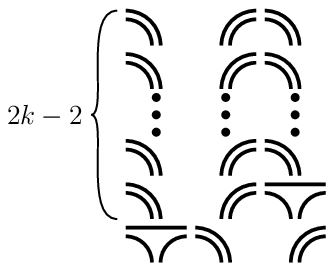}}\\
$\vdots$ & $\vdots$ \\
($2k-1$) &  $1$ left, $2k-1$ right, $1$ straight paths & \parbox[c]{151pt}{\epsfxsize=150pt\epsfbox{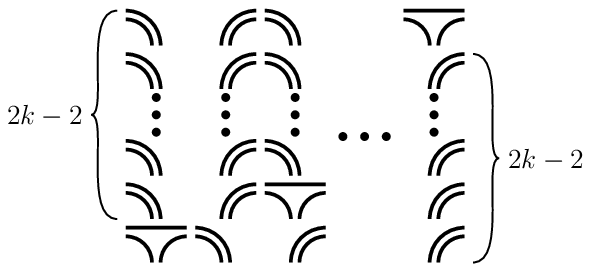}}
\end{longtable*}

\medskip

Suppose for a contradiction that $\omega$ is decomposable as $\omega'+\omega''$.
Since the degree of $\omega$ is odd, one of the two parts has even degree. Assume $\omega'$ has even degree $\deg(\omega')=2i$ with $i >0$.

Every second vertex $v$ on the horizontal line has a single straight line in the local decomposition of $\omega$.
Moreover at such $v$ the degree is attained $\deg_v \omega = \deg \omega$.
Thus  $\deg_v \omega' = \deg \omega'$ and $\deg_v \omega'' = \deg \omega''$ as well.
By Corollary~\ref{cor_parity_on_caterpillar_in_terms_of_local_paths} the local decomposition of $\omega''$ at $v$
  consists of the single straight path and odd number of left paths and odd number of right paths,
whereas the local decomposition of $\omega'$ at $v$
  consists of even number of left paths and even number of right paths.
This means $\omega'$ must have $2i$ left paths at the left-most inner vertex on the horizontal line of $\ccG$.
At the next inner vertex on the horizontal line, $\omega'$ has $2i$ right paths by Example~\ref{lem_parity_on_caterpillar}, and so on.
This is a contradiction, as at some inner vertex on the horizontal line $\omega$ has less than $2i$ left paths.
\end{proof}

\section{A lower bound for odd $g$}\label{sect_bound_not_sharp_on_odd_g}

If $g$ is odd, there exist graphs with first Betti number $g$
   with minimal generators of $\tau(\ccG)$ in degree $g$.
They are obtained by extending the labelling from Example~\ref{claim_the_deg_g_plus_1_elt_is_indecomposable}
    to the extra loop of the $(g+1)$-caterpillar.
We do not know if the maximal generating degree is $g$ or $g+1$
    among the graphs with first Betti number $g$.
However, we know it for the $g$-caterpillar graph.

\begin{lem}\label{lem_caterpillar_even_degree}
Let  $\ccG$ be the $g$-caterpillar graph.
Let $\omega\in\tau(\ccG)$ be an element of even degree at least $6$. Then $\omega$ can be decomposed into degree $2$ and $\deg(\omega)-2$ elements.
\end{lem}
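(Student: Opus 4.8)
The plan is to prove Lemma~\ref{lem_caterpillar_even_degree} by induction on $\deg(\omega)$, peeling off a degree $2$ piece at each step, so it suffices to produce a single decomposition $\omega=\omega'+\omega''$ with $\deg(\omega')=2$ and $\omega''\in\tau(\ccG)$; iterating from any even degree $\ge 6$ down to $4$ (and then stopping, since the lemma only claims decomposition into a degree $2$ and a degree $\deg(\omega)-2$ piece) finishes the argument. The core work is therefore: given $\omega$ of even degree $d\ge 6$ on the $g$-caterpillar, build a degree $2$ network-sum $\omega'$ with $\omega-\omega'\in\tau(\ccG)$. I would work vertex by vertex along the horizontal spine using the local path decomposition of Figure~\ref{fig_decomposition_into_local_paths}: at each inner spine vertex $v$, $\omega$ has $x_v$ right, $y_v$ left, $z_v$ straight local paths, and Corollary~\ref{cor_parity_on_caterpillar_in_terms_of_local_paths} forces these to all have the same parity as $\deg_v(\omega)$.

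First I would record the constraints a degree $2$ summand $\omega'\in\tau(\ccG)$ must satisfy: by \ref{item_degree_inequality}, $\deg_v(\omega')\le 2$, and by Corollary~\ref{cor_parity_on_caterpillar_in_terms_of_local_paths}, $\deg_v(\omega')\in\{0,2\}$ at every spine vertex not on a loop; moreover $\omega'$ restricted to each loop edge $e_o$ must be even by Example~\ref{lem_parity_on_caterpillar}, i.e. $0$ or $2$, and the complementary non-negativity $e^*(\omega-\omega')\ge 0$ must hold on every edge. The natural choice is to take $\omega'$ to be supported on a sub-path of the spine together with possibly two of the dangling loop-and-leaf gadgets at its two ends: concretely, pick the "$2$ straight paths through a contiguous block of spine vertices, turning into the appropriate loop/leaf configuration at the two ends" element. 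The degree $2$ on caterpillars is well understood (it is a union of two networks), and the key point is that, because $d\ge 6$, there is enough slack — at the spine vertices where $\omega$ has $z_v\ge 2$ we can safely subtract $2$ straight paths, and at the transition vertices where $z_v$ might be small we exploit that $\omega$, having even degree $\ge 6$, must have $x_v+y_v+z_v$ large enough (and of controlled parity) to absorb a degree $2$ piece that uses left/right turns there.

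The main obstacle I anticipate is the bookkeeping at vertices where $\deg_v(\omega)<d$, i.e. where the degree inequality is slack: there $z_v$ can be $0$, so a purely "straight" degree $2$ summand would violate non-negativity on the spine edge, and one must instead route $\omega'$ through a loop (using $x_v$ or $y_v$ paths and the fact that $e_o^*(\omega)$ is even, hence either $0$ or $\ge 2$). The delicate case is when such a slack vertex has $e_o^*(\omega)=0$ on its loop and $z_v=0$ — then $\omega'$ cannot pass through that vertex at all in degree $2$, which forces $\omega'$ to be "localized" to one side; I would argue that in that situation the problem splits, essentially reducing $\ccG$ to a smaller caterpillar-like piece carrying all the degree, and then either $d\ge 6$ still leaves room on that side or the whole configuration is forced to be a small explicit one that can be checked by hand. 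I expect the cleanest route is a careful case analysis on the multiset $\{z_v\}$ along the spine: if some interior spine vertex has $z_v\ge 2$, subtract $2$ straight paths from a maximal such block; otherwise all interior $z_v\le 1$, which by Corollary~\ref{cor_parity_on_caterpillar_in_terms_of_local_paths} and the even total degree severely constrains $\omega$ and lets one write down the degree $2$ summand explicitly using one or two loops. Finally I would verify in each case that the residual labelling satisfies \ref{item_parity_condition}, \ref{item_non-negativity_condition}, \ref{item_triangle_inequalities}, \ref{item_degree_inequality}, which is routine given that $\omega'$ was chosen to be a genuine network-sum of degree $2$ and $\omega\in\tau(\ccG)$.
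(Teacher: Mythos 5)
Your overall shape matches the paper's: both arguments construct a single degree-$2$ summand $\omega'$ by working left to right along the spine with local paths and then fix the labels on the loops at the end. But the selection rule you propose --- subtract two straight paths along a maximal block of spine vertices with $z_v\ge 2$, and treat the complementary case separately --- has a genuine gap, and the idea that fills it is exactly what your sketch lacks. The constraint driving the whole proof is the degree inequality \ref{item_degree_inequality} for $\omega''=\omega-\omega'$: at every spine vertex $v$ with $\deg_v(\omega)>d-2$ you are \emph{forced} to have $\deg_v(\omega')=2$, and such ``tight'' vertices need not lie in any block with $z_v\ge 2$. For instance $x_v=2$, $y_v=4$, $z_v=0$ gives $a_v=4$, $b_v=2$, $c_v=6$ and $\deg_v(\omega)=6=d$; there $\omega'$ cannot go straight, and by Corollary~\ref{cor_parity_on_caterpillar_in_terms_of_local_paths} (which rules out $x_v(\omega')=y_v(\omega')=1$) it must turn up into the loop, terminating that component of $\omega'$. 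With several such tight vertices along the spine the turns must be chosen coherently, and a block-of-straight-paths rule gives no way to arbitrate them; symmetrically, wherever your $\omega'$ vanishes you must \emph{prove} $\deg_v(\omega)\le d-2$, which ``$z_v\le 1$ outside the block'' does not yield. Your fallback (``the problem splits'', ``a small explicit configuration checked by hand'') does not address this, because the degree inequality is a global condition tied to $\deg(\omega'')=d-2$, not to one side of the spine.

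The paper resolves all of this with one invariant your proposal is missing: on each horizontal edge, set the value of $\omega'$ to $2$ if the value of $\omega$ there exceeds $d/2$, to $0$ if it is below $d/2$, with a free choice at exactly $d/2$. A six-case check (split by the value of $\omega'$ on the incoming edge and the comparison of $\omega$'s outgoing value with $d/2$) shows that this rule can always be realized by local paths actually present in $\omega$, and that wherever $\omega'$ is absent both adjacent horizontal values are at most $d/2$ with at least one strictly smaller, which forces $\deg_v(\omega)\le d-2$ using parity and the triangle inequality \ref{item_triangle_inequalities}. The hypothesis $d\ge 6$ enters only in the borderline cases where a horizontal value equals $d/2$. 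I would recommend replacing your block heuristic with this threshold rule; the loop adjustments at the end of your sketch then go through essentially as in the paper.
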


\begin{proof}
For this proof we fix the following notation. At each vertex $v$ on the horizontal line of the $g$-caterpillar, we choose an embedding of the tripod so that $a_v$, $b_v$ and $c_v$ are arranged as in Figure~\ref{fig_building_graph}, so $c_v$ is the value on the vertical edge, $a_v$ on the left one, $b_v$ on the right one.

Let  $d:=\deg(\omega)$ be the degree of $\omega$. We will define a degree $2$ element $\omega'$, so that $\omega=\omega'+\omega''$ is a decomposition in $\tau(\ccG)$. In our construction we use local paths. This assures that the resulting $\omega'$ and $\omega''$  fulfill the triangle inequalities \ref{item_triangle_inequalities} of $\tau(\ccG)$. To assure the degree inequalities \ref{item_degree_inequality}, we require that $\omega'$ satisfies the following  at each inner vertex~$v$
\begin{align}\label{condition_for_omega_prime}
d-2\ge \text{deg}_v(\omega'')=\text{deg}_v(\omega)-\text{deg}_v(\omega').
\end{align}
Note that if $\text{deg}_v(\omega')=2$, or equivalently, if $\omega'$ is constructed using two local paths at $v$, then (\ref{condition_for_omega_prime}) is automatically fulfilled.

First we define the labels of $\omega'$ on the caterpillar tree, ignoring the labels on the loops for a while.
We define them inductively from left to right using local paths,
   in such a way that the following condition holds for every inner vertex $v$ of the caterpillar tree

\begin{align}\label{inductive_condition_for_omega}
 b_v(\omega')=\left\{
  \begin{array}{l l}
0 & \text{if } b_v(\omega)<   \frac{d}{2},\\
2 & \text{if } b_v(\omega)>   \frac{d}{2},\\
0 \text{ or } 2 & \text{otherwise}.
\end{array} \right.
\end{align}
First we define $\omega'$ for the left-most edge $e$
\begin{align*}
 e^*(\omega')=\left\{
  \begin{array}{l l}
0 & \textrm{if } e^*(\omega)\leq \frac{d}{2},\\
2 & \textrm{otherwise}.
\end{array} \right.
\end{align*}
We need to prove that at every step there is enough of local paths in $\omega$
to fulfill conditions~(\ref{condition_for_omega_prime}) and~\eqref{inductive_condition_for_omega}.
There are six cases depending on the value of $\omega'$ on the previous edge and the value of $\omega$ on the current one.

\begin{enumerate}
\item If $a_v(\omega')=2$ and $b_v(\omega)>d/2$, then we have to prove that $\omega$ has at least two straight paths at $v$, since we need $b_v(\omega')=2$. The condition~\eqref{inductive_condition_for_omega} gives $a_v(\omega)\ge d/2$, and
\[
\# \horpath= z_v(\omega)=\frac{a_v(\omega)+b_v(\omega)-c_v(\omega)}{2} > \frac{d}{2} - \frac{c_v(\omega)}{2} >0,
\]
where the last inequality holds because of
\[
 2d\ge a_v(\omega)+b_v(\omega)+c_v(\omega)>\frac{d}{2}+\frac{d}{2}+c_v(\omega)=d+c_v(\omega).
\]

As $d$ and $c_v(\omega)$ are both even, we conclude that $\omega$ has at least two straight paths at $v$.

\item\label{item_condition_a=2_b=d/2}
 If $a_v(\omega')=2$ and $b_v(\omega)=d/2$, then we have to prove that $\omega$ has either at least two straight paths or at least two left paths at $v$, since we need either $b_v(\omega')=2$ or $b_v(\omega')=0.$ The condition~\eqref{inductive_condition_for_omega} gives $a_v(\omega)\ge d/2$, and
\[
\# \horpath +\# \leftpath = z_v(\omega) + y_v(\omega) = a_v(\omega)\ge \frac{d}{2} \ge 3.
\]

\item If $a_v(\omega')=2$ and $b_v(\omega)<d/2$, then we have to prove that $\omega$ has at least two left paths at $v$, since we need $b_v(\omega')=0$. The condition~\eqref{inductive_condition_for_omega} gives $a_v(\omega)\ge d/2$, and thus $a_v(\omega)-b_v(\omega)>0$.
By the triangle inequalities \ref{item_triangle_inequalities} we have
\begin{multline*}
\# \leftpath=y_v(\omega)=\frac{c_v(\omega)+a_v(\omega)-b_v(\omega)}{2} \ge
\frac{a_v(\omega)-b_v(\omega)}{2} +
\frac{|a_v(\omega)-b_v(\omega)|}{2} =
a_v(\omega) - b_v(\omega) \ge 1
\end{multline*}
As  $a_v(\omega)$ and $b_v(\omega)$ are both even, we conclude that $\omega$ has at least two left paths at $v$.

\item If $a_v(\omega')=0$ and $b_v(\omega)>d/2$, then we have to prove that $\omega$ has at least two right paths at $v$, since we need $b_v(\omega')=2$. The condition~\eqref{inductive_condition_for_omega} gives $a_v(\omega)\le d/2$, and thus $b_v(\omega)-a_v(\omega)>0$.
Again, by the triangle inequalities \ref{item_triangle_inequalities} we have
\begin{multline*}
\# \rightpath=x_v(\omega)=\frac{c_v(\omega)+b_v(\omega)-a_v(\omega)}{2} \ge
\frac{ b_v(\omega)-a_v(\omega)}{2} +
\frac{|b_v(\omega)-a_v(\omega)|}{2} = b_v(\omega) - a_v(\omega) \ge 1
\end{multline*}

\item\label{item_condition_a=0_b=d/2}
 If $a_v(\omega')=0$ and $b_v(\omega)=d/2$, we have to prove that either $\deg_v(\omega)\le d-2$ or $\omega$ has two right paths at $v$, since we need either $b_v(\omega')=0$ or $b_v(\omega')=2.$
If $\deg_v(\omega)\ge d-1$, using the condition \eqref{inductive_condition_for_omega} gives
\[
 \# \rightpath=x_v(\omega)= \frac{b_v(\omega)+c_v(\omega)-a_v(\omega)}{2} = \deg_v(\omega)-a_v(\omega) \ge (d-1)-\frac{d}{2} \ge 2.
\]

\item If $a_v(\omega')=0$ and $b_v(\omega)<d/2$, then we have to prove that $\deg_v(\omega)\le d-2$, since we need $b_v(\omega')=0.$ The condition \eqref{inductive_condition_for_omega} gives $a_v(\omega)\le d/2$, and thus $a_v(\omega)+b_v(\omega) \le d-1$. As $a_v(\omega)$ and $b_v(\omega)$ are both even, we even have
         $a_v(\omega)+b_v(\omega) \le d-2$.
       Using this and the triangle inequalities \ref{item_triangle_inequalities}, we get the desired inequality:
       \[
        2\deg_v(\omega)=a_v(\omega)+b_v(\omega)+c_v(\omega)\le d-2+c_v(\omega)\le d -2 + a_v(\omega)+b_v(\omega) \le 2d - 4.
       \]
\end{enumerate}
Note that we use $d\ge 6$ only in cases with $b=d/2$,
  i.e., cases \ref{item_condition_a=2_b=d/2} and \ref{item_condition_a=0_b=d/2}).

It remains to suitably define the labels of $\omega'$ on the loops.
Fix a loop $o$.
In the local decomposition of $\omega$ at the vertex $v_o$ some of the local paths come in pairs:
  There are $e^*_o(\omega)/2$ loops with $2$ on the adjacent edge and $1$ on the loop;
  there are $(o^*(\omega)-e^*_o(\omega)/2)$ single loops with $0$ on the adjacent edge and $1$ on the loop.

If $e^*_o(\omega')=2$
   then $e^*_o(\omega) \ge 2$, and there is at least one loop with $2$ on the adjacent edge in
   the local decomposition of $\omega$.
Set $o^*(\omega')=1$.

Otherwise $e^*_o(\omega')=0$ by the construction above.
This implies together with the Remark~\ref{value_less_than_degree} that $e^*_o(\omega)\le d-2$. Hence the number of single loops
\[\Bigl(o^*-\frac{e^*_o}{2}\Bigr)(\omega)=\deg_{v_o}(\omega) - e^*_o(\omega)\ge \deg_{v_o}(\omega)-d+2,
\]
and we define
\[
o^*(\omega')= \max \{ \deg_{v_o}(\omega)-d+2,0 \}.
\]
Finally we check that the condition (\ref{condition_for_omega_prime}) is fulfilled.
\[
\text{deg}_{v_o}(\omega)-\text{deg}_{v_o}(\omega')\le \text{deg}_{v_o}(\omega)-(\deg_{v_o}(\omega)-d+2)\le d-2.
\]
This completes the proof.
\end{proof}

\section{Examples on small graphs}\label{sect_small_examples}

We conclude the article with some examples of indecomposable elements for special cases of graphs with small first Betti number $g$.

\begin{table}[tb]
{\scriptsize
\begin{tabular}{llll}
\begin{minipage}[t]{0.197\textwidth}
\begin{tabular}[t]{|r|r|r|}
\hline
\multicolumn{3}{|l|}{\cellcolor{pink} $g=1$}\\
\hline
  $d$ & generator & $\#$ \\
\hline
 1 & (0) & 2\\
 2 & (2) & 1\\
\hline
\end{tabular}\\
\vskip 0.1em
\begin{tabular}[t]{|r|r|r|}
\hline
\multicolumn{3}{|l|}{\cellcolor{pink} $g=2$}\\
\hline
  $d$ & generator & $\#$ \\
\hline
$1$ & $(0, 0, 0)$ & $4$ \\
  $2$ & $(0, 2, 2)$ & $1$ \\
  $2$ & $(2, 0, 2)$ & $3$ \\
  $2$ & $(2, 2, 0)$ & $3$ \\
  $3$ & $(2, 2, 2)$ & $4$ \\
\hline
\end{tabular}\\
\vskip 0.1em
\begin{tabular}[t]{|r|r|r|}
\hline
\multicolumn{3}{|l|}{\cellcolor{pink} $g=3$}\\
\hline
  $d$ & generator & $\#$ \\
\hline
  $1$ & $(0, 0, 0, 0, 0)$ & $8$ \\
  $2$ & $(0, 0, 0, 2, 2)$ & $3$ \\
  $2$ & $(0, 2, 2, 0, 2)$ & $3$ \\
  $2$ & $(0, 2, 2, 2, 0)$ & $3$ \\
  $2$ & $(2, 0, 2, 0, 2)$ & $9$ \\
  $2$ & $(2, 0, 2, 2, 0)$ & $9$ \\
  $2$ & $(2, 2, 0, 0, 0)$ & $9$ \\
  $2$ & $(2, 2, 0, 2, 2)$ & $1$ \\
  $3$ & $(0, 2, 2, 2, 2)$ & $8$ \\
  $3$ & $(2, 0, 2, 2, 2)$ & $16$ \\
  $3$ & $(2, 2, 2, 0, 2)$ & $16$ \\
  $3$ & $(2, 2, 2, 2, 0)$ & $16$ \\
  $3$ & $(2, 2, 2, 2, 2)$ & $8$ \\
  $4$ & $(2, 2, 2, 2, 4)$ & $9$ \\
  $4$ & $(2, 2, 2, 4, 2)$ & $9$ \\
  $4$ & $(2, 4, 2, 2, 2)$ & $9$ \\
  $4$ & $(4, 2, 2, 2, 2)$ & $27$ \\
  \hline
\end{tabular}
\end{minipage}
&
\begin{minipage}[t]{0.231\textwidth}
\begin{tabular}[t]{|r|r|r|}
\hline
\multicolumn{3}{|l|}{\cellcolor{pink} $g=4$ and $d \le 2$}\\
\hline
  $d$ & generator & $\#$ \\
\hline
  $1$ & $(0, 0, 0, 0, 0, 0, 0)$ & $16$ \\
  $2$ & $(0, 0, 0, 0, 0, 2, 2)$ & $9$ \\
  $2$ & $(0, 0, 0, 2, 2, 0, 2)$ & $9$ \\
  $2$ & $(0, 0, 0, 2, 2, 2, 0)$ & $9$ \\
  $2$ & $(0, 2, 2, 0, 2, 0, 2)$ & $9$ \\
  $2$ & $(0, 2, 2, 0, 2, 2, 0)$ & $9$ \\
  $2$ & $(0, 2, 2, 2, 0, 0, 0)$ & $9$ \\
  $2$ & $(0, 2, 2, 2, 0, 2, 2)$ & $1$ \\
  $2$ & $(2, 0, 2, 0, 2, 0, 2)$ & $27$ \\
  $2$ & $(2, 0, 2, 0, 2, 2, 0)$ & $27$ \\
  $2$ & $(2, 0, 2, 2, 0, 0, 0)$ & $27$ \\
  $2$ & $(2, 0, 2, 2, 0, 2, 2)$ & $3$ \\
  $2$ & $(2, 2, 0, 0, 0, 0, 0)$ & $27$ \\
  $2$ & $(2, 2, 0, 0, 0, 2, 2)$ & $3$ \\
  $2$ & $(2, 2, 0, 2, 2, 0, 2)$ & $3$ \\
  $2$ & $(2, 2, 0, 2, 2, 2, 0)$ & $3$ \\
\hline
\end{tabular}\\
\vskip 0.1em
\begin{tabular}[t]{|r|r|r|}
\hline
\multicolumn{3}{|l|}{\cellcolor{pink} $g=4$ and $d = 3$}\\
\hline
  $d$ & generator & $\#$ \\
\hline
  $3$ & $(0, 0, 0, 2, 2, 2, 2)$ & $32$ \\
  $3$ & $(0, 2, 2, 0, 2, 2, 2)$ & $32$ \\
  $3$ & $(0, 2, 2, 2, 2, 0, 2)$ & $32$ \\
  $3$ & $(0, 2, 2, 2, 2, 2, 0)$ & $32$ \\
  $3$ & $(0, 2, 2, 2, 2, 2, 2)$ & $16$ \\
  $3$ & $(2, 0, 2, 0, 2, 2, 2)$ & $64$ \\
  $3$ & $(2, 0, 2, 2, 2, 0, 2)$ & $64$ \\
  $3$ & $(2, 0, 2, 2, 2, 2, 0)$ & $64$ \\
  $3$ & $(2, 0, 2, 2, 2, 2, 2)$ & $32$ \\
  $3$ & $(2, 2, 0, 2, 2, 2, 2)$ & $16$ \\
  $3$ & $(2, 2, 2, 0, 2, 0, 2)$ & $64$ \\
  $3$ & $(2, 2, 2, 0, 2, 2, 0)$ & $64$ \\
  $3$ & $(2, 2, 2, 0, 2, 2, 2)$ & $32$ \\
  $3$ & $(2, 2, 2, 2, 0, 0, 0)$ & $64$ \\
  $3$ & $(2, 2, 2, 2, 0, 2, 2)$ & $16$ \\
  $3$ & $(2, 2, 2, 2, 2, 0, 2)$ & $32$ \\
  $3$ & $(2, 2, 2, 2, 2, 2, 0)$ & $32$ \\
  $3$ & $(2, 2, 2, 2, 2, 2, 2)$ & $16$ \\
\hline
\end{tabular}
\end{minipage}
&
\begin{tabular}[t]{|r|r|r|}
\hline
\multicolumn{3}{|l|}{ \cellcolor{pink} $g=4$, $d = 4$}\\
\hline
  $d$ & generator & $\#$ \\
\hline
  $4$ & $(0, 2, 2, 2, 2, 2, 4)$ & $27$ \\
  $4$ &  $(0, 2, 2, 2, 2, 4, 2)$ & $27$ \\
  $4$ &  $(0, 2, 2, 4, 2, 2, 2)$ & $27$ \\
  $4$ &  $(0, 4, 4, 2, 2, 2, 2)$ & $27$ \\
  $4$ &  $(2, 0, 2, 2, 2, 2, 4)$ & $45$ \\
  $4$ &  $(2, 0, 2, 2, 2, 4, 2)$ & $45$ \\
  $4$ &  $(2, 0, 2, 4, 2, 2, 2)$ & $45$ \\
  $4$ &  $(2, 2, 2, 0, 2, 2, 4)$ & $45$ \\
  $4$ &  $(2, 2, 2, 0, 2, 4, 2)$ & $45$ \\
  $4$ &  $(2, 2, 2, 2, 0, 4, 4)$ & $9$ \\
  $4$ &  $(2, 2, 2, 2, 2, 2, 4)$ & $27$ \\
  $4$ &  $(2, 2, 2, 2, 2, 4, 2)$ & $27$ \\
  $4$ &  $(2, 2, 2, 2, 4, 0, 4)$ & $45$ \\
  $4$ &  $(2, 2, 2, 2, 4, 2, 2)$ & $81$ \\
  $4$ &  $(2, 2, 2, 2, 4, 4, 0)$ & $45$ \\
  $4$ &  $(2, 2, 2, 4, 2, 0, 2)$ & $45$ \\
  $4$ &  $(2, 2, 2, 4, 2, 2, 0)$ & $45$ \\
  $4$ &  $(2, 2, 2, 4, 2, 2, 2)$ & $27$ \\
  $4$ &  $(2, 2, 2, 4, 2, 2, 4)$ & $9$ \\
  $4$ &  $(2, 2, 2, 4, 2, 4, 2)$ & $9$ \\
  $4$ &  $(2, 2, 4, 2, 2, 2, 2)$ & $81$ \\
  $4$ &  $(2, 4, 2, 0, 2, 2, 2)$ & $45$ \\
  $4$ &  $(2, 4, 2, 2, 2, 0, 2)$ & $45$ \\
  $4$ &  $(2, 4, 2, 2, 2, 2, 0)$ & $45$ \\
  $4$ &  $(2, 4, 2, 2, 2, 2, 2)$ & $27$ \\
  $4$ &  $(2, 4, 2, 2, 2, 2, 4)$ & $9$ \\
  $4$ &  $(2, 4, 2, 2, 2, 4, 2)$ & $9$ \\
  $4$ &  $(2, 4, 2, 4, 2, 2, 2)$ & $9$ \\
  $4$ &  $(4, 0, 4, 2, 2, 2, 2)$ & $135$ \\
  $4$ &  $(4, 2, 2, 0, 2, 2, 2)$ & $135$ \\
  $4$ &  $(4, 2, 2, 2, 2, 0, 2)$ & $135$ \\
  $4$ &  $(4, 2, 2, 2, 2, 2, 0)$ & $135$ \\
  $4$ &  $(4, 2, 2, 2, 2, 2, 2)$ & $81$ \\
  $4$ &  $(4, 2, 2, 2, 2, 2, 4)$ & $27$ \\
  $4$ &  $(4, 2, 2, 2, 2, 4, 2)$ & $27$ \\
  $4$ &  $(4, 2, 2, 4, 2, 2, 2)$ & $27$ \\
  $4$ &  $(4, 4, 0, 2, 2, 2, 2)$ & $27$ \\
\hline
\end{tabular}
&
\begin{tabular}[t]{|r|r|r|}
\hline
\multicolumn{3}{|l|}{ \cellcolor{pink} $g=4$ and $d = 5$}\\
\hline
  $d$ &  generator & $\#$ \\
\hline
  $5$ &  $(2, 2, 2, 4, 2, 4, 4)$ & $32$ \\
  $5$ &  $(4, 4, 2, 4, 2, 2, 2)$ & $64$ \\
  $5$ &  $(4, 4, 2, 4, 2, 4, 4)$ & $16$ \\
\hline
\end{tabular}
\end{tabular}}
\caption{Generators of phylogenetic semigroup of $g$-caterpilar graph.}
\label{table_list_of_gens}
\end{table}

\begin{figure}[htb]
\begin{tabular}{ccccc}
\begin{minipage}[t]{0.25\textwidth}
\begin{center}
\epsfxsize=70pt\epsfbox{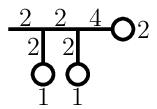}
\end{center}
\caption{An indecomposable element of degree $4$ on the $3$-caterpillar graph.}
\label{fig_indecomposable_of_deg_4_on_3_caterpillar}
\end{minipage}
 &   \phantom{b} &
\begin{minipage}[t]{0.28\textwidth}
\begin{center}
\epsfxsize=100pt\epsfbox{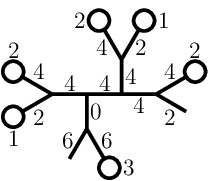}
\end{center}
\caption{An indecomposable element of degree $6$ on a graph with $6$ loops and two leaves.}
\label{fig_indecomposable_of_deg_6_on_genus_6_graph}
\end{minipage}
&   \phantom{b} &
\begin{minipage}[t]{0.30\textwidth}
\begin{center}
\epsfxsize=120pt\epsfbox{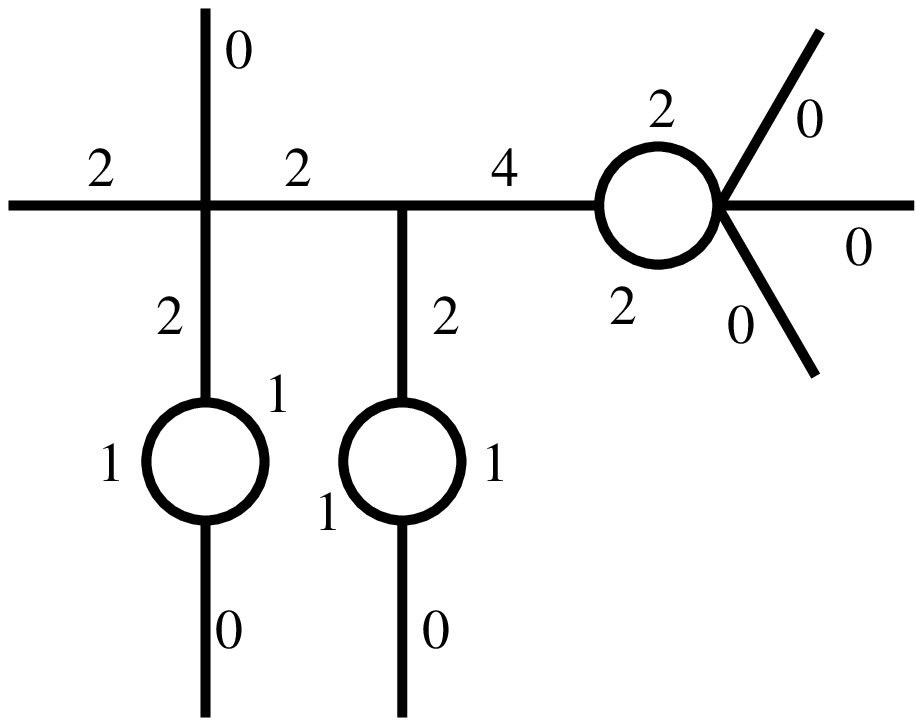}
\end{center}
\caption{An indecomposable element as in Figure~\ref{fig_indecomposable_of_deg_4_on_3_caterpillar} adapted to a graph with no loops and with vertices of high valency.}
\label{fig_indecomposable_with_zeroes}
\end{minipage}
\end{tabular}
\end{figure}

\begin{table}[hbt]
\begin{center}
{\footnotesize
\begin{tabular}{|c|r|r|r|r|r|}
\hline
$d$ & $g=1$ & $g=2$ & $g=3$ & $g=4$ & $g=5$ \\
\hline
 all & $3$ & $15$ & $163$ & $2708$ & $49187$\\
\hline
 $1$ & $2$ & $4$ & $8$ & $16$ & $32$\\
 $2$ & $1$ & $7$ & $37$ & $175$ & $781$\\
 $3$ &     & $4$ & $64$ & $704$ & $6624$\\
 $4$ &     &     & $54$ & $1701$ & $35190$\\
 $5$ &     &     &     & $112$ & $6560$\\
\hline
\end{tabular}}
\end{center}
\caption{Number of generators of the phylogenetic semigroup of $g$-caterpillar graph in each degree.}
\label{table_number_of_gens}
\end{table}

The example on Figure~\ref{fig_indecomposable_of_deg_4_on_3_caterpillar}
   is an indecomposable element of degree $4$ on the $3$-caterpillar graph.
It shows that our bound $d\ge 6$ in Theorem~\ref{thm_even_decompose_on_caterpilar} is necessary,
  and also proves that, in the case $g=3$, the upper bound of Theorem~\ref{thm_upper_bound} is attained.

On Figure~\ref{fig_indecomposable_of_deg_6_on_genus_6_graph}
  there is a degree $6$ indecomposable element on a graph with $6$ loops and one leaf.
This shows that our decomposition Theorem~\ref{thm_even_decompose_on_caterpilar}
  does not work on this non-caterpillar graph.

Despite our examples in Sections~\ref{sect_construction_of_deg_g_plus_1_elts} and \ref{sect_small_examples}
   are indecomposable elements on trivalent graphs that contain loops,
   it is possible to slightly modify those examples to graphs with no loops and to graphs of higher valency.
This is provided by the following two elementary properties, and an example how to apply them is on
   Figure~\ref{fig_indecomposable_with_zeroes}.

\begin{prop}\label{prop_omega_has_0}
   Suppose $e$ is an edge of $\ccG$, and $\omega \in \tau(\ccG)$ is such that $e^*(\omega) = 0$.
   Let $\ccG'$ be the graph obtained from $\ccG$ by removing the edge $e$ and let $\omega' \in \tau(\ccG')$
     be the labelling identical with $\omega $ away from $e$.
   Then $\omega$ is indecomposable in $\tau(\ccG)$ if and only if $\omega'$ is indecomposable in $\tau(\ccG')$.
\end{prop}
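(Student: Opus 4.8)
The plan is to show that removing an edge labelled $0$ changes neither the relevant part of the semigroup nor what it means to be a non-trivial sum. Start from the non-negativity of edge labels: since $\tau(\ccG)\subseteq\tau(\ccT)$ for the tree $\ccT$ associated to $\ccG$, and $\tau(\ccT)$ lies in the cone over $\{1\}\times P(\ccT)\subseteq\{1\}\times[0,1]^{\ccE}$, every $\eta\in\tau(\ccG)$ has $f^*(\eta)\ge 0$ for each edge $f$ (cf.\ Remark~\ref{value_less_than_degree}). Hence if $\omega=\omega_1+\omega_2$ in $\tau(\ccG)$ with $e^*(\omega)=0$, then $e^*(\omega_1)=e^*(\omega_2)=0$. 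So $F:=\{\eta\in\tau(\ccG):e^*(\eta)=0\}$ is a graded sub-semigroup which is closed under taking summands, and $\omega$ is a non-trivial sum in $\tau(\ccG)$ exactly when it is a non-trivial sum inside $F$.

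Next I would set up the obvious correspondence. Let $\Phi$ forget the $e$-coordinate, and $\Psi$ extend a labelling of $\ccG'$ by $0$ on $e$ (discarding a vertex if it becomes isolated). These are mutually inverse, degree-preserving isomorphisms of additive semigroups between $F$ and $F':=\Phi(F)\subseteq\tau(\ccG')$, and $\omega'=\Phi(\omega)\in F'$. The proposition then reduces to two claims: (i) $F'\subseteq\tau(\ccG')$; and (ii) $F'$ is closed under taking summands inside $\tau(\ccG')$, that is, if $\sigma\in F'$ and $\sigma=\sigma_1+\sigma_2$ in $\tau(\ccG')$ then $\sigma_1,\sigma_2\in F'$. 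Granting (i) and (ii): $\omega$ is a non-trivial sum in $\tau(\ccG)$ $\iff$ it is one inside $F$ $\iff$ (apply $\Phi$) $\omega'$ is one inside $F'$ $\iff$ (by (ii)) $\omega'$ is one in $\tau(\ccG')$. The forward implication of the proposition needs only (i); the converse needs only (ii).

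To prove (i) and (ii) I would check that each condition defining membership in $\tau(\ccG)$ becomes, after its $e$-coordinate is set to $0$, either a defining condition of $\tau(\ccG')$ or automatically true. For trivalent $\ccG$ the conditions on the $\ccG$-side are the explicit ones of Lemma~\ref{def_cone_of_G}: non-negativity, triangle and degree inequalities at vertices off $e$ are unchanged, and at an endpoint $v$ of $e$ the triangle inequality together with $e^*=0$ forces the labels at $v$ other than on $e$ to coincide (to vanish, if $e$ is the loop at $v$); the vertex $v$ of $\ccG'$ is then $2$-valent or univalent, and the only constraint $\tau(\ccG')$ puts there is precisely that — a fact that follows from the tree-with-distinguished-pairs description and underlies Proposition~\ref{prop_G_has_two_valent}. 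For a general $\ccG$ I would argue on that tree directly: choose $\ccT$ so that, if $e$ is a cycle edge, it is cut first into a distinguished pair; deleting the corresponding edge or edges of $\ccT$ yields a tree $\ccT_\circ$ encoding $\ccG'$ together with the remaining pairs, and a decomposition of $\eta\in F$ into networks of $\ccT$ (Corollary~\ref{cor_on_tree_elements_decompose}), each avoiding the deleted edges since their labels are $0$, restricts to a decomposition of $\Phi(\eta)$ into networks of $\ccT_\circ$ compatible with the remaining kernels; this gives (i). For (ii) one observes that $\Psi(\sigma)\in\tau(\ccG)$ forces the labels of $\sigma$, hence of $\sigma_1$ and $\sigma_2$ by non-negativity in $\tau(\ccG')$, to vanish on exactly those edges whose vanishing is what makes each $\Psi(\sigma_i)$ satisfy the conditions of $\tau(\ccG)$ at the vertices incident to $e$.

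I expect this last verification to be the only real obstacle. Deleting $e$ may turn an inner vertex into a leaf, merge the two edge-ends at a $2$-valent endpoint of $e$, or detach a loop, so the conditions cutting out $\tau(\ccG')$ are not literally a sublist of those cutting out $\tau(\ccG)$. The uniform reason the comparison still works is that whenever a condition is lost in passing to $\ccG'$, the hypothesis $e^*(\,\cdot\,)=0$ together with non-negativity forces the relevant neighbouring labels to be $0$, so the lost condition was vacuous on the labellings in question. Splitting the argument into the cases ``$e$ a leaf edge'', ``$e$ an inner non-loop edge'' and ``$e$ a loop'', and allowing $\ccG'$ to be disconnected or to acquire $2$-valent vertices (which is harmless), organizes this bookkeeping; conceptually nothing beyond non-negativity of edge labels and normality of tree polytopes enters.
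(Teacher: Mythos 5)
The paper states Proposition~\ref{prop_omega_has_0} without proof, as one of ``two elementary properties'', so there is no argument of the authors' to measure yours against; judged on its own, your proof is correct. The reduction is exactly right: non-negativity of edge labels (which does hold, since $\tau(\ccG)\subseteq\tau(\ccT)$ and $P(\ccT)\subseteq[0,1]^{\ccE}$) shows that the face $F=\{\eta\in\tau(\ccG):e^*(\eta)=0\}$ is closed under taking summands, and the proposition becomes the statement that forgetting the $e$-coordinate identifies $F$ with a subsemigroup $F'\subseteq\tau(\ccG')$ that is itself closed under taking summands. Your verification of (i) and (ii) by decomposing into networks on the associated trees (choosing $\ccT$ so that $e$, if it is a cycle edge, is the first edge cut, so that $\ccT_\circ=\ccT\setminus\{\underline{e},\overline{e}\}$ serves $\ccG'$ with the remaining pairs) is the right mechanism, and it handles arbitrary valencies uniformly, which Lemma~\ref{def_cone_of_G} alone could not since $\ccG'$ fails to be trivalent even when $\ccG$ is. The one step worth writing out in full is the claim that the conditions ``lost'' at an endpoint $v$ of $e$ which becomes a leaf of $\ccG'$ (or disappears) are vacuous: this needs slightly more than non-negativity, namely the parity structure at $v$ --- each network in a decomposition of $\omega$ over $\ccT$ meets the inner vertex $v$ in an even number of edges, so if every edge at $v$ other than the single remaining edge $f$ carries the label $0$ on $\omega$, then $f^*(\omega)=0$ as well; only then does non-negativity pass $f^*=0$ to the summands $\sigma_i$ and guarantee that the extensions by zero are again sums of networks of $\ccT$. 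You do invoke exactly this (via the triangle inequality in the trivalent case and the tree description in general), so I record it only as the place where the sketch should become a computation.
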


\begin{prop}\label{prop_G_has_two_valent}
   Suppose $v$ is a two-valent vertex of $\ccG$ and let $e_1$ and $e_2$ be the two edges containing $v$.
   Then for any $\omega \in \tau(\ccG)$ we have $e_1^*(\omega) = e_2^*(\omega)$.
   Furthermore, $\tau(\ccG)$ is naturally isomorphic to $\tau(\ccG')$,
     where $\ccG'$ is the graph obtained by removing $v$ from $\ccG$ and replacing $e_1$ and $e_2$ with a single edge $e$.
\end{prop}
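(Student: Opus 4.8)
The plan is to prove both assertions by passing to an associated tree $\ccT$ of $\ccG$, where $\tau$ is defined directly in terms of networks, and transporting the conclusion back.

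For the first assertion, observe that a two-valent vertex is not a leaf, so $v\in\ccN$, and that the cutting operation turning a cycle edge into two leaf edges leaves the valency of every old vertex unchanged; hence $v$ is still a two-valent inner vertex of $\ccT$, incident to two edges $f_1,f_2$. In a tree, any path meeting an inner vertex passes through it and so uses exactly two of the edges at that vertex; at $v$ these must be $f_1$ and $f_2$, so every network on $\ccT$ --- equivalently, every element of $\tau(\ccT)_1$ --- assigns the same value to $f_1$ and $f_2$. Since $P(\ccT)$ is the convex hull of the networks and $\tau(\ccT)=\mathrm{cone}(\{1\}\times P(\ccT))\cap\Mg$, it follows that $\tau(\ccT)\subseteq\Ker(f_1^*-f_2^*)$. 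As $\tau(\ccG)\subseteq\tau(\ccT)$, and on $\tau(\ccG)$ the functional $f_i^*$ coincides with $e_i^*$ (either $f_i=e_i$, or $e_i$ is the edge divided in forming $\ccT$ and $f_i$ is its half at $v$, on which $\tau(\ccG)$ carries the common value descending to $e_i$), we conclude $e_1^*(\omega)=e_2^*(\omega)$ for all $\omega\in\tau(\ccG)$.

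For the isomorphism, let $\ccT'$ be the tree obtained from $\ccT$ by deleting $v$ and merging $f_1,f_2$ into a single edge $f$. The key step I would carry out is verifying that $\ccT'$ is an associated tree of $\ccG'$ with distinguished pairs matching those of $\ccT$: contracting a two-valent vertex commutes with cutting cycle edges, and although one of $e_1,e_2$ might itself have been cut in forming $\ccT$, this occurs for at most one of them --- because $v$ being two-valent forces $e_1$ and $e_2$ to belong to exactly the same cycles of $\ccG$, so once one is cut the other is no longer a cycle edge --- and in that case the distinguished pair $(\underline{e_i},\overline{e_i})$ of $\ccT$ turns into the pair $(f,\overline{e_i})$ of $\ccT'$, which is precisely the pair produced by cutting the merged edge $e$ of $\ccG'$. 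Granting this, the evident lattice isomorphism $\Psi\colon\Mg(\ccT')\xrightarrow{\sim}\Ker(f_1^*-f_2^*)\subseteq\Mg(\ccT)$ --- the identity on the degree and on every edge other than $f,f_1,f_2$, and sending $f$ to the common value of $f_1,f_2$ --- carries networks on $\ccT'$ bijectively onto networks on $\ccT$, hence $\{1\}\times P(\ccT')$ onto $\{1\}\times P(\ccT)$ and $\tau(\ccT')$ onto $\tau(\ccT)\cap\Ker(f_1^*-f_2^*)=\tau(\ccT)$; by the matching of distinguished pairs it also carries the kernels cutting out $\tau(\ccG')$ inside $\tau(\ccT')$ onto those cutting out $\tau(\ccG)$ inside $\tau(\ccT)$, intersected with $\Ker(f_1^*-f_2^*)$. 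Combining, $\Psi(\tau(\ccG'))=\tau(\ccG)\cap\Ker(f_1^*-f_2^*)=\tau(\ccG)$, the last equality by the first assertion; composing with the canonical embeddings into $\Mg(\ccG')$ and $\Mg(\ccG)$ yields the asserted graded semigroup isomorphism, sending the new edge $e$ to the common value on $e_1$ and $e_2$.

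I expect the main obstacle to be exactly this bookkeeping: checking that contraction of the two-valent vertex in the chosen associated tree again produces an associated tree of $\ccG'$ with distinguished pairs correctly identified, in particular in the sub-case where an edge incident to $v$ was the one that got cut. The remaining ingredients --- the network argument for the first claim, and the fact that $\Psi$ respects polytopes, semigroups, and kernel conditions --- are routine. In the degenerate case where $\ccG'$ acquires a component consisting of a single edge (the case set aside in Remark~\ref{value_less_than_degree}), the statement still holds under the conventions adopted there.
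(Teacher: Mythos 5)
Your argument is correct. The paper itself offers no proof of this proposition --- it is stated as one of ``two elementary properties'' --- so there is nothing to compare against; your write-up in effect supplies the missing proof, and it works. Two small remarks. For the first assertion you can bypass the path-counting entirely: the vertices of $P(\ccT)$ are the $0/1$ vectors lying in $M$, and the defining parity condition of $M$ at the two-valent inner vertex $v$ reads $f_1^*(u)+f_2^*(u)\in 2\ZZ$, which for $0/1$ values forces $f_1^*(u)=f_2^*(u)$; this then propagates to the cone and avoids any discussion of how paths traverse $v$ (the combinatorial definition of ``path'' in the paper is loose enough that a single leaf edge ending at $v$ could formally qualify, but such a labelling is excluded by membership in $M$, which is what actually defines $P(\ccT)$). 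For the second assertion, your key bookkeeping point --- that at most one of $e_1,e_2$ is cut in forming $\ccT$, because any cycle through the two-valent vertex $v$ must use both edges at $v$, so cutting one destroys all cycles through the other --- is exactly the observation needed to match the distinguished pairs of the contracted tree $\ccT'$ with those of an associated tree of $\ccG'$, and the rest (compatibility of $\Psi$ with the parity conditions, the polytopes, and the kernel conditions, plus $\tau(\ccT)\cap\Ker(f_1^*-f_2^*)=\tau(\ccT)$ from the first part) is routine as you say. The only case you leave implicit is the degenerate one where $e_1=e_2$ is a loop at $v$, in which case the ``merged edge'' makes no sense; the proposition tacitly assumes $e_1\neq e_2$, consistent with the degenerate components already set aside in Remark~\ref{value_less_than_degree}.
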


For $g\le 4$ Table~\ref{table_list_of_gens} lists all generators of $\tau(\ccG)$
 by specifying the possible labellings on all edges except for the loops.
The order of edges goes from left to right, beginning with the leaf, the second is the vertical edge towards the first loop,
   the third is the next horizontal edge, etc.
For instance, the example of Figure~\ref{fig_indecomposable_of_deg_4_on_3_caterpillar} is encoded $(2,2,2,2,4)$
   and can be found in the table for $g=3$
   in the $14^{\text{th}}$ row.
The label on each loop can be set to any integer in the range $\setfromto{\frac{1}{2} c_v}{d-\frac{1}{2} c_v}$.
In the third column $\#$, we specify how many possibilities there are for the labelling on the loops.
An analogous table for $g=5$ would need 359 rows, thus we omit it from this article.

Table~\ref{table_number_of_gens}
  presents the numbers of generators of $\tau(\ccG)$ in each degree, where $\ccG$ is the $g$-caterpillar graph,
   and $g \le 5$.
These calculations were obtained using the convex bodies package in Magma
   \cite{magma}, \cite{magma_handbook_convex_chapter}.
\bibliography{g-graphs}
\bibliographystyle{alpha_four}
\end{document}